\newcommand{\erase}[1]{}
\newtheorem{theorem}{Theorem}[section]
\newtheorem{lemma}[theorem]{Lemma}
\newtheorem{proposition}[theorem]{Proposition}
\newtheorem{corollary}[theorem]{Corollary}
\newtheorem{_algorithm}[theorem]{Algorithm}
\newtheorem{_procedure}[theorem]{Procedure}
\newtheorem{_definition}[theorem]{Definition}
\newenvironment{definition}{\begin{_definition}\rm}{\end{_definition}}
\newtheorem{_remark}[theorem]{\it Remark}
\newenvironment{remark}{\begin{_remark}\rm}{\end{_remark}}
\newtheorem{_example}[theorem]{Example}
\newtheorem{_assumption}[theorem]{Assumption}
\newtheorem{_construction}[theorem]{Construction}
\newtheorem{_claim}[theorem]{Claim}
\newtheorem{_conjecture}[theorem]{Conjecture}
\numberwithin{equation}{section}
\numberwithin{table}{section}
\numberwithin{figure}{section}
\newcommand{\F}{\mathord{\mathbb F}}
\newcommand{\M}{\mathord{\mathbb M}}
\renewcommand{\P}{\mathord{\mathbb  P}}
\newcommand{\Q}{\mathord{\mathbb  Q}}
\newcommand{\R}{\mathord{\mathbb R}}
\newcommand{\Z}{\mathord{\mathbb Z}}
\newcommand{\AAA}{\mathord{\mathcal A}}
\newcommand{\CCC}{\mathord{\mathcal C}}
\newcommand{\HHH}{\mathord{\mathcal H}}
\newcommand{\LLL}{\mathord{\mathcal L}}
\newcommand{\NNN}{\mathord{\mathcal N}}
\newcommand{\PPP}{\mathord{\mathcal P}}
\newcommand{\SSS}{\mathord{\mathcal S}}
\newcommand{\TTT}{\mathord{\mathcal T}}
\newcommand{\maprightsp}[1]{\; \smash{\mathop{\; \longrightarrow \; }\limits\sp{#1}}\; }
\newcommand{\maprightsb}[1]{\; \smash{\mathop{\; \longrightarrow \; }\limits\sb{#1}}\; }
\newcommand{\maprightspsb}[2]{\; \smash{\mathop{\; \longrightarrow \; }\limits\sp{#1}\limits\sb{#2}}\; }
\newcommand{\maprightinjsb}[1]{\; \smash{\mathop{\; \inj\; }\limits\sb{#1}}\; }
\newcommand{\mapdownsurj}{
\hbox{$\bigm\downarrow$}
\llap{\hbox{\raise 2pt\hbox{$\bigm\downarrow$}}}%
\vstrechmapdown
}
\newcommand{\mapupsurj}{
\hbox{$\bigm\uparrow$}
\llap{\hbox{\raise 2pt\hbox{$\bigm\uparrow$}}}%
\vstrechmapup
}
\newcommand{\inj}{\hookrightarrow}
\newcommand{\isom}{\xrightarrow{\sim}}
\newcommand{\set}[2]{\{\; {#1} \; \mid \; {#2} \;  \}}
\newcommand{\shortset}[2]{\{ {#1} \,|\, {#2}   \}}
\newcommand{\gen}[1]{\langle {#1}  \rangle}
\newcommand{\tensor}{\otimes}
\newcommand{\sprime}{\sp\prime}
\newcommand{\spprime}{\sp{\prime\prime}}
\newcommand{\sptimes}{\sp{\times}}
\newcommand{\sperp}{\sp{\perp}}
\newcommand{\dual}{\sp{\vee}}
\newcommand{\semidirectproduct}{\rtimes}
\newcommand{\inv}{\sp{-1}}
\newcommand{\GL}{\mathord{\mathrm{GL}}}
\newcommand{\PGU}{\mathord{\mathrm{PGU}}}
\newcommand{\PSU}{\mathord{\mathrm{PSU}}}
\newcommand{\PGL}{\mathord{\mathrm{PGL}}}
\newcommand{\PSL}{\mathord{\mathrm{PSL}}}
\newcommand{\OG}{\mathord{\mathrm{O}}}
\newcommand{\Ker}{\operatorname{\mathrm{Ker}}\nolimits}
\newcommand{\Image}{\operatorname{\mathrm{Im}}\nolimits}
\newcommand{\Aut}{\operatorname{\mathrm{Aut}}\nolimits}
\newcommand{\Gal}{\operatorname{\mathrm{Gal}}\nolimits}
\newcommand{\pr}{\mathord{\mathrm{pr}}}
\newcommand{\mystruth}[1]{\phantom{\hbox{\vrule height #1}}}
\newcommand{\mystruthd}[2]{\phantom{\hbox{\vrule  height #1 depth #2}}}
\newcommand{\intf}[1]{\langle #1\rangle}
\newcommand{\intfvoid}{\intf{\phantom{i}, \phantom{i}}}
\newcommand{\discg}{A}
\newcommand{\discf}{q}
\newcommand{\Leech}{\Lambda_{24}}
\newcommand{\Lamtt}{\Lambda_{22}}
\renewcommand{\PGL}{\mathord{\mathrm{PGL}}}
\renewcommand{\PSL}{\mathord{\mathrm{PSL}}}
\renewcommand{\PGU}{\mathord{\mathrm{PGU}}}
\newcommand{\PGamU}{\mathord{\mathrm{P}\Gamma\mathrm{U}}}
\newcommand{\PGamL}{\mathord{\mathrm{P}\Gamma\mathrm{L}}}
\renewcommand{\PSU}{\mathord{\mathrm{PSU}}}
\newcommand{\GU}{\mathord{\mathrm{GU}}}
\newcommand{\Ff}{\F_{4}}
\newcommand{\Ft}{\F_{2}}
\newcommand{\dotzero}{\mathord{\cdot} 0}
\newcommand{\dotttt}{\mathord{\cdot} 222}
\newcommand{\dottttAB}{\mathord{\cdot} 222_{AB}}
\newcommand{\circtttAB}{\mathord{\circ} 222_{AB}}
\newcommand{\thePSU}{\PSU(6,4)}
\newcommand{\thePGU}{\PGU(6,4)}
\newcommand{\thePGamU}{\PGamU(6,4)}
\newcommand{\mattranspose}{\lower 1.1pt \hbox{${}^{\mathrm T}$}\hskip 0pt}
\newcommand{\baromega}{\bar{\omega}}
\newcommand{\theD}{D}
\newcommand{\PPPX}{\PPP_X}
\newcommand{\NNNX}{\NNN_X}
\newcommand{\RAB}{R_{AB}}
\newcommand{\RABsperp}{R_{AB}\sperp}
\newcommand{\TTTAB}{\TTT_{AB}}
\newcommand{\nuPPP}{\nu_{\PPP}}
\newcommand{\nuTTT}{\nu_{\TTT}}
\newcommand{\rhoPPP}{\rho_{\PPP}}
\newcommand{\tauTTT}{\tau_{\TTT}}
\newcommand{\sigmaAB}{\sigma_{AB}}
\newcommand{\diffTLatAB}{\Lambda\sprime_{AB}}
\renewcommand{\discg}[1]{D_{#1}}
\renewcommand{\discf}[1]{q_{#1}}
\newcommand{\stdphi}{\phi_0}
\newcommand{\detsprime }{\mathord{\det}\sprime}
\newcommand{\dcolon}{\;\colon\,}
\newcommand{\zetaLam}{\zeta_{\Lambda}}
\newcommand{\BLam}{B_{\Lambda}}
\newcommand{\LLLX}{\LLL_X}
\newcommand{\midpt}{C}
\begin{document}
\title[On Edge's correspondence  associated with $\cdot 222$]%
{On Edge's correspondence associated with $\cdot 222$}
\author{Ichiro Shimada}
\address{Department of Mathematics, 
Graduate School of Science, 
Hiroshima University,
1-3-1 Kagamiyama, 
Higashi-Hiroshima, 
739-8526 JAPAN}
\email{ichiro-shimada@hiroshima-u.ac.jp}
\thanks{This work was supported by JSPS KAKENHI Grant Numbers JP16K13749, JP16H03926}

\begin{abstract}
We describe explicitly the correspondence of Edge
between the set of planes 
contained in the Fermat cubic $4$-fold in characteristic $2$,
and the set of lattice points $T$ of the Leech lattice $\Leech$
such that $OABT$ is a regular tetrahedron,
where $O$ is the origin of $\Leech$,  and $A$ and $B$ are fixed points of $\Leech$
such that $OAB$ is a regular triangle of edge length $2$.
Using this description,
we present Conway's isomorphism from $\thePSU$ to $\dotttt$ 
in terms of matrices.
\end{abstract}
\maketitle

\section{Introduction}
In Table~10.4~of Conway and Sloane ~\cite{CSbook}, 
it is shown 
that the subgroup $\dotttt$ of the orthogonal group $\dotzero=\OG(\Leech)$
of the Leech lattice $\Leech$ is isomorphic to the simple group $\thePSU$.
In~\cite{Ed70}, Edge constructed a permutation representation of $\dotttt$ 
on a certain set of lattice points of $\Leech$,
and suggested that this representation corresponds to
the natural permutation  representation of $\thePSU$ on the set $\PPPX$
of linear planes contained in  $X\tensor \overline{\F}_2$, 
where $X$ is 
the  Fermat cubic $4$-fold
$$
X\dcolon  x_1^3+\cdots+x_6^3=0
$$
defined over $\Ft$,
and $\overline{\F}_2$ is an algebraically closed field of characteristic $2$.
The purpose of this note is to clarify this correspondence 
by writing them explicitly.
Our idea is based on the investigation in~\cite{ShimadaPLMS}
of the lattice of   numerical equivalence classes of $n$-dimensional linear subspaces 
contained in  a $2n$-dimensional  
Hermite variety.
As an application, we write Conway's isomorphism
$\thePSU\cong \dotttt$ 
in terms of matrices.
It turns out that Conway's isomorphism
is closely related to the well-known isomorphism
between $\PSL(3, 4)$ and the Mathieu group $\M_{21}$
(see Proposition~\ref{prop:code}).
For simplicity, 
we put
\begin{equation}\label{eq:theD}
\theD:=9196830720=|\thePSU|=|\dotttt|.
\end{equation}
\par
First, we define a geometric object  $(\PPPX, \nuPPP)$ in $X$.
The hypersurface  $X\tensor \overline{\F}_2\subset \P^5$ contains exactly $891$ linear planes,
and they are all defined over $\Ff$ (see Segre~\cite{Seg65}). 
Let $\PPPX$ denote the set of all these planes.
For $\Pi, \Pi\sprime\in \PPPX$, we put
$$
\nuPPP(\Pi, \Pi\sprime):=\Pi\cdot\Pi\sprime=(1-(-2)^{\dim(\Pi\cap \Pi\sprime)+1})/3
$$
with  the understanding that $\dim(\emptyset)=-1$,
where $\Pi\cdot\Pi\sprime$ is  the intersection number  of the algebraic cycles $\Pi$ and $ \Pi\sprime$  of $X$.
The second equality in the above formula follows from the excess intersection formula~(see Chapter~6.3 of Fulton~\cite{FultonBook}).
For $a\in \overline{\F}_2$, 
let $a\mapsto \bar{a}:=a^2$ denote the Frobenius action
over $\Ft$,
and for a matrix $g$ with components in $\overline{\F}_2$,
let $\bar{g}$ denote the matrix obtained from $g$
by applying $a\mapsto \bar{a}$ to all the components of $g$.
We put
$$
 \GU(6, 4):=\set{g\in \GL(6, \overline{\F}_2)}{g\cdot \mattranspose{\bar{g}}=I_6},
 \quad
 \thePGU:=\GU(6, 4)/\Ff\sptimes.
 $$
Then $\thePGU$ is equal to the projective automorphism group of $X\tensor \overline{\F}_2\subset \P^5$,
and it acts on the set $X(\Ff)$ of $\Ff$-rational points of $X$.
Let $\thePGamU$ be
the subgroup of
the full symmetric group
of  $X(\Ff)$ 
generated by
the permutations induced by the  action of $ \thePGU$ and $\Gal(\Ff/\Ft)$.
We have
$$
|\thePGamU|=2\,|\thePGU|=6\theD.
$$
Since all $\Pi\in \PPPX$ are defined over $\Ff$, 
we have a natural homomorphism 
$$
\rhoPPP\dcolon \thePGamU \to \Aut(\PPPX, \nuPPP).
$$
\par
Next we define a lattice-theoretic object $(\TTTAB, \nuTTT)$
 in the Leech lattice $\Leech$.
Let $O$ denote the origin of $\Leech$.
For points $P, Q$ of $\Leech\tensor\R$,
we denote  by $|PQ|$ the length $\intf{PQ, PQ}^{1/2}$
of the vector  
$$
PQ:=Q-P, 
$$ 
where 
the inner-product $\intfvoid$ on $\Leech\tensor\R$
is induced from  the symmetric bilinear form of $\Leech$.
The vector $OP=P-O$ is sometimes simply denoted by $P$.
We put
$$
\AAA:=\set{[A, B]}{A, B\in \Leech, \; |OA|=|OB|=|AB|=2},
$$
whose cardinality $|\AAA|$ is equal to 
$4600\cdot 196560$.
Then $\dotzero$ acts on $\AAA$ transitively (see Chapter~10 of~\cite{CSbook}).
Let  $[A, B]$ be an arbitrary element of $\AAA$.
We put
$$
\TTTAB:=\set{T\in \Leech}{|OT|=|AT|=|BT|=2}.
$$
Let $\midpt$ be the mid-point of an edge of the regular triangle $OAB$.
(Note that $\midpt$ is not a lattice point of $\Leech$.)
For $T, T\sprime\in \TTTAB$, we put 
$$
\nuTTT(T, T\sprime):=\intf{\midpt T, \midpt T\sprime}=\intf{OT, OT\sprime}-1.
$$
Let $\diffTLatAB$ denote the sublattice of $\Leech$ generated by the vectors $TT\sprime$,
where $T$ and $T\sprime$ run through $\TTTAB$:
$$
\diffTLatAB:=\gen{\;TT\sprime \;\mid\;  T, T\sprime \in \TTTAB\; }.
$$
Then we obviously have a natural homomorphism
$$
\tauTTT\dcolon  \Aut (\TTTAB, \nuTTT) \to \OG(\diffTLatAB).
$$
\par
Edge~\cite{Ed70} observed that $|\PPPX|=|\TTTAB|=891$,
and that $\PPPX$ and $\TTTAB$  have many 
combinatorial properties in common.
\begin{definition}
 An \emph{Edge correspondence} is  
 a bijection $\phi\colon \PPPX\isom \TTTAB$ such that
 $\nuTTT(\phi(\Pi), \phi(\Pi\sprime))=\nuPPP(\Pi, \Pi\sprime)$ holds for any $\Pi, \Pi\sprime\in \PPPX$.
\end{definition}
Our first result is as follows:
\begin{theorem}\label{thm:main1}
An Edge correspondence exists.
\end{theorem}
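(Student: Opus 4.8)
The plan is to reduce Theorem~\ref{thm:main1} to the isomorphism of two explicit configurations of vectors and then to pin that isomorphism down through a common lattice. First I would record the numerology of the two forms. Both take the value $3$ on the diagonal, since $\Pi\cdot\Pi=3$ and $\nuTTT(T,T)=\intf{OT,OT}-1=4-1=3$. Off the diagonal, on the $X$-side one has $\nuPPP(\Pi,\Pi\sprime)=(1-(-2)^{d+1})/3$ with $d=\dim(\Pi\cap\Pi\sprime)$, and I would check from the geometry of the generators (the maximal totally isotropic planes) of the Hermitian variety that $d\in\{-1,0,1\}$ for distinct planes, so that $\nuPPP\in\{-1,0,1\}$; on the Leech side I would verify that $|OT|=|AT|=|BT|=2$ forces $\intf{OT,OT\sprime}\in\{0,1,2\}$ for $T\neq T\sprime$, so that $\nuTTT\in\{-1,0,1\}$ as well. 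An Edge correspondence is then precisely an isomorphism between the two complete graphs on $891$ vertices whose edges are labelled by the common value set $\{-1,0,1\}$, equivalently between the two association schemes that these labellings define.

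Next I would realize each side as a distinguished set of norm-$3$ vectors. On the $X$-side the set $\PPPX$ consists exactly of the generators of the Hermitian variety $x_1^3+\cdots+x_6^3=0$ over $\Ff$, and I would invoke the analysis of \cite{ShimadaPLMS} to describe the lattice $\Lambda$ generated by their numerical classes under $\nuPPP$ and to embed $\PPPX$ into $\Lambda$ as an intrinsically characterised family of vectors of norm $3$. On the Leech side each $T\in\TTTAB$ satisfies $\intf{OT,OA}=\intf{OT,OB}=2$, so the norm-$3$ vectors $\midpt T$ all lie in one affine subspace of $\Leech\tensor\R$ of codimension $2$; the differences $TT\sprime$ span the rank-$22$ sublattice $\diffTLatAB$, and I would compute its isometry class (rank, discriminant form, and minimal vectors) from the geometry of $\Leech$ around the triangle $OAB$, recovering the full Gram matrix $(\nuTTT(T,T\sprime))$ of the configuration from $\diffTLatAB$ together with the one transverse direction in which the whole configuration sits at a common height.

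The core of the argument is to prove that $\Lambda$ and $\diffTLatAB$ are isometric and, crucially, that one isometry can be chosen to carry the configuration $\PPPX$ onto $\set{\midpt T}{T\in\TTTAB}$; since an isometry preserves $\intfvoid$, such a match yields the desired $\phi$ at once. Abstract isometry of the two lattices should follow once their invariants agree, but this is not enough on its own: the two distinguished $891$-element configurations must also be matched. I would do this by coordinatising both sides through a common substructure, namely the one governed by the projective plane $\P^2(\Ff)$ and its automorphism group $\PSL(3,4)\cong\M_{21}$, which appears on both sides (cf.~Proposition~\ref{prop:code}); this furnishes explicit coordinates, after which $\phi$ can be written down and the identity $\nuTTT(\phi(\Pi),\phi(\Pi\sprime))=\nuPPP(\Pi,\Pi\sprime)$ checked on the few orbits of pairs. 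I expect this last matching step to be the main obstacle, since isometric lattices need not respect their special vector-configurations; it is the homogeneity of the two actions and the shared $\M_{21}$-structure that should make $\phi$ rigid enough to be exhibited explicitly.
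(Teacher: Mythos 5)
Your proposal is correct in outline and follows essentially the same route as the paper: the decisive step---anchoring the bijection on a pair of ``base'' elements plus the $21$-element set carrying the $\P^2(\Ff)$-line code, matching it to the truncated Golay code $\CCC_{21}$ via Proposition~\ref{prop:code}, and then verifying $\nuTTT(\stdphi(\Pi),\stdphi(\Pi\sprime))=\nuPPP(\Pi,\Pi\sprime)$ by explicit finite computation---is exactly how the paper constructs its correspondence $\stdphi$. Your intermediate lattice-isometry step is an unnecessary detour (the paper works directly with the finite configurations, and the lattice statements appear separately as Corollary~\ref{cor:NNN} and Lemma~\ref{lem:Lam22}), but you correctly recognize that abstract isometry alone cannot match the two $891$-point configurations and fall back on the shared code/$\M_{21}$ structure, which is precisely the paper's mechanism.
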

\par
Two algebraic cycles $\Sigma, \Sigma\sprime$ of codimension $2$ on $X\tensor\overline{\F}_2$
are \emph{numerically equivalent} if $\Sigma\cdot \Sigma\spprime=\Sigma\sprime\cdot \Sigma\spprime$ holds
for any algebraic cycle $\Sigma\spprime$ of codimension $2$ on $X\tensor\overline{\F}_2$.
Let $\NNNX$ be the lattice of  numerical equivalence classes of algebraic cycles of codimension $2$ on $X\tensor\overline{\F}_2$.
The rank of  $\NNNX$ is equal to the 4th Betti number $b_4(X)=23$ of $X$;
that is, $X$ is supersingular (see Shioda and Katsura~\cite{ShiodaKatsura79}, Tate~\cite{Tate63}).
\begin{corollary}\label{cor:NNN}
The lattice $\NNNX$
is generated by the numerical equivalence classes of planes $\Pi\in \PPPX$, and 
hence is isomorphic to the lattice generated  by the vectors $\midpt T\in \Leech\tensor\Q$, where $T$ runs through $\TTTAB$.
\end{corollary}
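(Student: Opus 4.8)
The plan is to separate the statement into two parts: first, that the numerical classes $[\Pi]$ of the planes $\Pi\in\PPPX$ generate the whole lattice $\NNNX$; and second, that this lattice is then isomorphic (isometric) to the lattice $L\sprime:=\gen{\midpt T\mid T\in\TTTAB}\subseteq\Leech\tensor\Q$. The second part I expect to follow formally from Theorem~\ref{thm:main1}, so the real substance lies in the first.

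For the second part, let $\phi\colon\PPPX\isom\TTTAB$ be an Edge correspondence and let $G:=(\nuPPP(\Pi,\Pi\sprime))_{\Pi,\Pi\sprime\in\PPPX}$ be the $891\times891$ Gram matrix of intersection numbers. By the defining property of $\phi$ we have $\intf{\midpt\phi(\Pi),\midpt\phi(\Pi\sprime)}=\nuPPP(\Pi,\Pi\sprime)$, so $G$ is simultaneously the Gram matrix of the generators $[\Pi]$ of $L:=\gen{[\Pi]\mid\Pi\in\PPPX}\subseteq\NNNX$ and, after relabelling by $\phi$, of the generators $\midpt T$ of $L\sprime$. I would then consider the two surjections $\psi\colon\mathbb{Z}^{891}\to L$, $e_\Pi\mapsto[\Pi]$, and $\psi\sprime\colon\mathbb{Z}^{891}\to L\sprime$, $e_\Pi\mapsto\midpt\phi(\Pi)$, both of which pull the respective bilinear forms back to the single form $x\mapsto x^\top Gx$ on $\mathbb{Z}^{891}$. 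Since $\Leech\tensor\R$ is positive definite, $G$ is positive semidefinite and $\psi\sprime(x)=0$ exactly when $x^\top Gx=0$, i.e.\ when $Gx=0$. The same description of the relations holds for $\psi$ once we know from the first part that the $[\Pi]$ span $\NNNX\tensor\Q$: the intersection form on $\NNNX$ is non-degenerate by the very definition of numerical equivalence, so on its full rational span $\psi(x)=0$ is equivalent to $\psi(x)$ being orthogonal to every $[\Pi\sprime]$, i.e.\ to $Gx=0$. Hence $\ker\psi=\ker\psi\sprime=\set{x\in\mathbb{Z}^{891}}{Gx=0}$, and both $L$ and $L\sprime$ are isometric to $\mathbb{Z}^{891}/\set{x}{Gx=0}$ with the form induced by $G$; in particular $\NNNX=L\cong L\sprime$.

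It remains to prove the first part, $L=\NNNX$, which I would do in two steps. \emph{Rank:} the rank of $L$ equals the rank of $G$, which by the identity above equals $\dim_\R\operatorname{span}\set{\midpt T}{T\in\TTTAB}$. Each $T\in\TTTAB$ lies on the codimension-two affine subspace $H$ of points equidistant from $O$, $A$, $B$, whereas the midpoint $\midpt$ of an edge of $OAB$ does not lie on $H$; hence, once one checks that $\TTTAB$ affinely spans all of $H$ (a finite verification from the explicit Leech vectors produced in the proof of Theorem~\ref{thm:main1}), the vectors $\midpt T$ span a space of dimension $\dim H+1=23$. Thus $L$ has rank $23=\operatorname{rank}\NNNX$ and therefore finite index in $\NNNX$. \emph{Index:} to see that this index is $1$ I would compare discriminants via $[\NNNX:L]^2=\det(L)/\det(\NNNX)$, reading $\det(L)$ off from $G$ and identifying $\det(\NNNX)$ with the discriminant of the numerical lattice of the supersingular fourfold $X$, a power of $2$ fixed by its Artin-type invariant. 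More conceptually, the equality $L=\NNNX$ should follow from the analysis of the numerical lattice of a $2n$-dimensional Hermite variety in~\cite{ShimadaPLMS}, specialized to $n=2$ and $p=2$, where the maximal linear subspaces are shown to generate the numerical classes.

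The main obstacle is precisely this index step: verifying that the planes generate $\NNNX$ on the nose rather than a proper finite-index sublattice. Spanning the rational space $\NNNX\tensor\Q$ is comparatively soft, as it reduces to the dimension count above, but the integrality assertion—controlling the discriminant, or equivalently ruling out denominators—is where the real work lies, and is the point at which I would rely on the explicit lattice data behind Theorem~\ref{thm:main1} together with the Hermite-variety results of~\cite{ShimadaPLMS}.
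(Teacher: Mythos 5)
Your reduction of the corollary to the generation statement, and your treatment of the isometry $\LLLX\cong\gen{\midpt T \mid T\in \TTTAB}$ (via the common Gram matrix $G$ and the identification of both relation kernels with $\set{x}{Gx=0}$), are sound --- indeed more careful than the paper, which treats this step as immediate from Theorem~\ref{thm:main1}; the rank count via the affine span of $\TTTAB$ is also fine. But the index step, which you yourself flag as ``where the real work lies,'' is a genuine gap, and neither of your proposed routes closes it. The discriminant comparison cannot succeed on its own: the paper computes $\discg{\LLLX}\cong\F_2^2$ with $\Q/\Z$-valued discriminant form $\left[\begin{smallmatrix} 0 & 1/2\\ 1/2 & 0\end{smallmatrix}\right]$, and \emph{every} order-two subgroup of $\F_2^2$ is isotropic for this form; so pure lattice theory permits an integral overlattice of $\LLLX$ of index $2$, that is, a unimodular $\NNNX$. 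Excluding this is exactly the assertion $\det(\NNNX)\ne 1$, which you never prove: the claim that the discriminant is ``a power of $2$ fixed by its Artin-type invariant'' with that invariant positive is, for this fourfold, essentially equivalent to the corollary being proved. The fallback citation of~\cite{ShimadaPLMS} does not repair this; the present paper invokes that reference only for the identification $\diffTLatAB=\Lamtt$, and if the generation statement were already proved there, the corollary would need no proof here.

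The idea you are missing is to exploit the group action. Since $\thePGU$ permutes the planes $\Pi\in\PPPX$, it acts equivariantly on $\LLLX\inj\NNNX$, so the subgroup $\NNNX/\LLLX\subset\discg{\LLLX}$ is not merely isotropic but $\thePGU$-invariant. The paper then checks that the order-$3$ generator $\alpha$ acts nontrivially on $\discg{\LLLX}\cong\F_2^2$, hence permutes the three non-zero (isotropic) elements cyclically, so the only $\thePGU$-invariant isotropic subgroup is $0$, forcing $\LLLX=\NNNX$. This single finite, equivariant computation is what substitutes for the arithmetic input (non-unimodularity of $\NNNX$) that your discriminant argument would otherwise require and cannot supply.
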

We denote by $\RAB\subset \Leech$ the sublattice of rank $2$  containing $A, B$, and put 
%
%
$$
\dottttAB:=\shortset{g\in \dotzero}{A^g=A,\; B^g=B},
\quad
\circtttAB:=\shortset{g\in \dotzero}{\RAB^g=\RAB}.
$$
Then  $\circtttAB$ acts on the orthogonal complement $\RABsperp$ of $\RAB$ in $\Leech$.
The following lemma is proved in Section~\ref{subsec:lemma}.
See~\cite[Chapter 6]{CSbook} for the definition of laminated lattices.
\begin{lemma}\label{lem:Lam22}
The sublattice $\diffTLatAB=\gen{TT\sprime \mid T, T\sprime \in \TTTAB}$ of $\Leech$ is equal to  $\RABsperp$,
and is isomorphic to the laminated lattice $\Lamtt$ of rank $22$.
Moreover, the natural homomorphism
$$
\sigmaAB\dcolon  \circtttAB\to \OG(\RABsperp)=\OG(\diffTLatAB)\cong \OG(\Lamtt)
$$
is an isomorphism.
\end{lemma}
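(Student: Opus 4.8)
The plan is to establish the three assertions in order, leaning on the theory of primitive embeddings into the unimodular lattice $\Leech$ together with the classification of laminated lattices in \cite[Chapter~6]{CSbook}.

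First I record the Gram data. From $|OA|=|OB|=|AB|=2$ one gets $\langle A,A\rangle=\langle B,B\rangle=4$ and $\langle A,B\rangle=2$, so $\RAB$ has Gram matrix $\left(\begin{smallmatrix}4&2\\2&4\end{smallmatrix}\right)$ of determinant $12$ and is isometric to $\sqrt{2}\,A_2$. This $\RAB$ is primitive in $\Leech$: any proper finite overlattice would have determinant $3$, hence be isometric to $A_2$ and contain a vector of norm $2$, which is impossible since $\Leech$ has minimal norm $4$. For $T\in\TTTAB$ one likewise has $\langle A,T\rangle=\langle B,T\rangle=2$, so $\langle T-T\sprime,A\rangle=\langle T-T\sprime,B\rangle=0$ and hence $\diffTLatAB\subseteq\RABsperp$. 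Because $\Leech$ is unimodular and $\RAB$ is primitive, $\RABsperp$ has rank $22$, its discriminant form is the negative of that of $\RAB$ (so $\det\RABsperp=12$), and its minimal norm is at least $4$; exhibiting $T,T\sprime\in\TTTAB$ with $\langle T,T\sprime\rangle=2$ produces a vector $T-T\sprime$ of norm $4$, so the minimal norm is exactly $4$. Since $\dotzero$ acts transitively on $\AAA$, the isometry class of $\RABsperp$ is independent of $[A,B]$, and matching these invariants with the laminated lattice — the orthogonal complement of a primitive copy of $\sqrt{2}\,A_2=\Lambda_2$ in $\Lambda_{24}=\Leech$ is the rank-$22$ laminated lattice \cite[Chapter~6]{CSbook} — gives $\RABsperp\cong\Lamtt$.

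It remains to prove the reverse inclusion $\RABsperp\subseteq\diffTLatAB$. The group $\dottttAB$ fixes $A$ and $B$, hence preserves $\TTTAB$ and therefore $\diffTLatAB$; since one checks that the $T-T\sprime$ span $\RABsperp\tensor\Q$ (either from the explicit list of the $891$ points of $\TTTAB$, or from the irreducibility of the $\dottttAB$-action on $\RABsperp\tensor\Q$), the sublattice $\diffTLatAB$ has full rank and hence finite index in $\RABsperp$. To see that the index is $1$ I would argue with minimal vectors: a norm-$4$ vector $r\in\RABsperp$ is of the form $T-T\sprime$ with $T,T\sprime\in\TTTAB$ if and only if there is some $T\in\TTTAB$ with $\langle T,r\rangle=2$, for then $T\sprime:=T-r$ automatically lies in $\TTTAB$. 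Granting that every minimal vector of $\RABsperp$ is met in this way, $\diffTLatAB$ contains all minimal vectors of $\Lamtt$, which generate $\Lamtt$, and equality follows. Pinning down the index exactly — equivalently, showing that each minimal vector of $\RABsperp$ has inner product $2$ with some $T\in\TTTAB$ — is the main obstacle; I expect to settle it from the explicit coordinates for $\Leech$, $A$, $B$ and $\TTTAB$ developed earlier (say by exhibiting $22$ differences spanning a sublattice of determinant $12$), or by promoting a single realised minimal vector to all of them using transitivity of $\dottttAB$ on the minimal vectors of $\Lamtt$.

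Finally, for the isomorphism $\sigmaAB$ I would invoke Nikulin's correspondence for the pair of mutually orthogonal primitive sublattices $\RAB,\RABsperp$ of the unimodular lattice $\Leech$: restriction identifies $\circtttAB$ with the group of pairs $(g_1,g_2)\in\OG(\RAB)\times\OG(\RABsperp)$ whose induced actions on the discriminant forms match under the glue anti-isometry, and $\sigmaAB$ is the projection $(g_1,g_2)\mapsto g_2$. Injectivity reduces to the statement that no nontrivial isometry of $\RAB\cong\sqrt{2}\,A_2$ acts trivially on its discriminant group: this is a finite check on $\OG(\RAB)\cong D_{12}$, and because the $\sqrt{2}$-scaling makes both the central involution $-1$ and a rotation of order $3$ act nontrivially on the discriminant group, the stable subgroup (being normal in $D_{12}$ and containing neither minimal normal subgroup) is trivial. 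Surjectivity then follows because $\OG(\RAB)\to\OG(q_{\RAB})$, being injective between groups of order $12$ (the discriminant form of $\sqrt{2}\,A_2$ has automorphism group $S_3\times C_2$), is onto; hence every $g_2$ admits a compatible $g_1$. Therefore $\sigmaAB$ is an isomorphism. The only delicate point in this last step is the elementary computation of the discriminant form of $\sqrt{2}\,A_2$.
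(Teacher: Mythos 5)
Your overall architecture is the same as the paper's: prove $\diffTLatAB\subseteq\RABsperp$, force equality by a determinant/index argument, identify the complement with $\Lamtt$, and deduce that $\sigmaAB$ is an isomorphism from Nikulin's gluing theory together with the fact that $\eta\colon\OG(\RAB)\to\OG(\discf{\RAB})$ is bijective. Your last paragraph is actually \emph{more} detailed than the paper, which asserts that $\eta$ is an isomorphism only ``by direct calculation'': your argument is correct, since $\discg{\RAB}\cong(\Z/2)^2\times\Z/3$, the order-$3$ rotation permutes the three nonzero $2$-torsion classes, $-I$ inverts the $3$-torsion, so the kernel of $\eta$, being a normal subgroup of the dihedral group of order $12$ containing neither of its two minimal normal subgroups, is trivial, and surjectivity follows from $|\OG(\discf{\RAB})|=|S_3\times C_2|=12$. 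The identification $\RABsperp\cong\Lamtt$ is also fine: the paper gets it by noting that, for the chosen $A,B$, the complement is literally the section of $\Leech$ by $x_{22}=x_{23}=x_{24}$, which is the definition of $\Lamtt$ in Figure 6.2 of \cite{CSbook}; your route via transitivity of $\dotzero$ on $\AAA$ is the same fact rearranged, and your primitivity argument for $\RAB$ is a correct point that the paper leaves implicit.

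The genuine gap is the central equality $\diffTLatAB=\RABsperp$, which you do not prove. You reduce it to the claim that every minimal vector of $\RABsperp$ is of the form $TT\sprime$ with $T,T\sprime\in\TTTAB$, and then explicitly ``grant'' that claim; moreover, even granting it, your conclusion needs the further fact that $\Lamtt$ is generated by its minimal vectors, which you also use without proof, and your second fallback (transitivity of $\dottttAB$ on the minimal vectors of $\Lamtt$) is itself a nontrivial unproven assertion that is no easier than what it is meant to establish. The paper closes exactly this hole by the finite computation you name only as a first fallback: using the explicit list of the $891$ points of $\TTTAB$, one computes that the sublattice generated by the differences $TT\sprime$ already has discriminant group of order $12$; since it is a full-rank sublattice of $\RABsperp$, whose determinant is also $12$, the index must be $1$. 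That computation is the actual content of this step of the lemma; your write-up identifies the right reduction but defers it, so as it stands the first assertion of the lemma is not proved.
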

The following theorem gives a geometric  explanation of Conway's isomorphism  $\thePSU\cong \dotttt$
via an Edge correspondence.
\begin{theorem}\label{thm:main2}
Let $\phi\colon (\PPPX, \nuPPP)\isom (\TTTAB, \nuTTT)$ be an Edge correspondence.
Then the  composite homomorphism 
$$
\thePGamU  \maprightspsb{}{\rhoPPP} 
\Aut(\PPPX, \nuPPP) \maprightspsb{\sim}{\textrm{\rm by $\phi$}}   
\Aut(\TTTAB, \nuTTT) \maprightspsb{}{\tauTTT}
 \OG(\diffTLatAB) \maprightspsb{\sim}{\sigmaAB\inv}
 \circtttAB
$$
is injective, has the image of index $2$ in $\circtttAB$, and induces 
an isomorphism $\thePSU\cong \dottttAB$.
\end{theorem}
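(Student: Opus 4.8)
Write $\Psi\colon\thePGamU\to\circtttAB$ for the composite homomorphism in the statement. My plan is to reduce the three assertions to the group theory of the simple group $\thePSU$ together with the action of $\circtttAB$ on the rank $2$ lattice $\RAB$. Because the middle arrow is the isomorphism induced by the Edge correspondence $\phi$ of Theorem~\ref{thm:main1}, and $\sigmaAB\inv$ is an isomorphism by Lemma~\ref{lem:Lam22}, injectivity of $\Psi$ will follow once I check that $\rhoPPP$ and $\tauTTT$ are injective. For $\tauTTT$ I would argue that an automorphism $\psi$ of $(\TTTAB,\nuTTT)$ in its kernel fixes every difference $TT\sprime$, so it is the restriction to $\TTTAB$ of a translation $P\mapsto P+c$; then $\nuTTT(\psi(T),\psi(T\sprime))=\nuTTT(T,T\sprime)$ forces $\intf{\midpt T,c}=0$ for all $T$ and $\intf{c,c}=0$, whence $c=0$. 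For $\rhoPPP$ I would use that $\thePGU$ acts transitively on the $891$ planes (Witt's theorem: the unitary group is transitive on the maximal totally isotropic subspaces); a point stabiliser then has order $3\theD/891=2^{15}\cdot3^3\cdot5\cdot7$, which is smaller than $\theD=|\thePSU|$, so the normal subgroup $\thePSU$ cannot lie in it, and $\rhoPPP$ is nontrivial, hence injective, on $\thePSU$. Since $\thePSU$ is the unique minimal normal subgroup of $\thePGamU$ (its quotient $S_3$ is computed below, and its centraliser is trivial), the relation $\ker\Psi\cap\thePSU=1$ then gives $\ker\Psi=1$.

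To identify $\Psi(\thePSU)$ I would introduce the restriction homomorphism $\circtttAB\to\OG(\RAB)$, whose kernel is exactly $\dottttAB$ because $\RAB=\gen{A,B}$ and fixing $\RAB$ pointwise means fixing $A$ and $B$. The Gram matrix $\bigl(\begin{smallmatrix}4&2\\2&4\end{smallmatrix}\bigr)$ of $(A,B)$ is twice that of the root lattice $A_2$, so $\OG(\RAB)\cong D_{12}$. As $\thePSU$ is simple and $D_{12}$ is solvable, the composite $\thePSU\to\circtttAB\to\OG(\RAB)$ is trivial, so $\Psi(\thePSU)\subseteq\dottttAB$. Transitivity of $\dotzero$ on $\AAA$ gives $|\dottttAB|=|\dotzero|/|\AAA|=\theD$ by a direct order computation from $|\AAA|=4600\cdot196560$, and since $|\Psi(\thePSU)|=|\thePSU|=\theD$ I obtain $\Psi(\thePSU)=\dottttAB$, which is the asserted isomorphism $\thePSU\cong\dottttAB$.

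For the index I would pass to quotients by $\dottttAB$. This identifies $\circtttAB/\dottttAB$ with a subgroup $G\le\OG(\RAB)\cong D_{12}$ and $\Psi(\thePGamU)/\dottttAB$ with $\thePGamU/\thePSU$. I would compute this last quotient directly: the diagonal part $\thePGU/\thePSU$ is cyclic of order $3$ (induced by the determinant on $\GU(6,4)$), and the Frobenius acts on this $C_3\cong\Ff\sptimes$ by $x\mapsto x^2=x\inv$, so $\thePGamU/\thePSU\cong C_3\rtimes C_2\cong S_3$. Thus $S_3\cong\Psi(\thePGamU)/\dottttAB\le G\le D_{12}$ and $[\circtttAB:\Psi(\thePGamU)]=[G:S_3]\in\{1,2\}$. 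The central involution $-1\in\dotzero$ lies in $\circtttAB$ and induces $-\mathrm{id}$ on $\RAB$, the central involution of $\OG(\RAB)\cong D_{12}$, which lies in no subgroup isomorphic to $S_3$ (such a subgroup has trivial centre). Hence $G\ne S_3$, so $G=D_{12}$ and the index is $2$.

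The step I expect to be the main obstacle is this last identification $\thePGamU/\thePSU\cong S_3$ rather than $C_6$. Were this quotient cyclic, its image in $D_{12}$ would contain the central involution $-\mathrm{id}$, the Leech isometry $-1$ would already belong to $\Psi(\thePGamU)$, and the index would collapse to $1$. So the crux is to show that the diagonal and Frobenius contributions to $\mathrm{Out}(\thePSU)$ generate a non-abelian $S_3$, and that the single extra symmetry enlarging $\Psi(\thePGamU)$ to all of $\circtttAB$ is precisely the central isometry $-1$ of $\Leech$.
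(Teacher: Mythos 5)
Your proposal takes a genuinely different route from the paper's, and most of it is sound. The paper proves the theorem first for the particular correspondence $\stdphi$: it computes the images of the generators $\alpha,\beta,\gamma$ as explicit $24\times 24$ matrices (Theorem~\ref{thm:main3}), verifies on these generators the factorization $\psi_0=i\circ\detsprime$ (which is what kills $\Ker\Psi_0$), and then gets the index $2$ from Plesken--Pohst's value $|\OG(\Lamtt)|=12\theD$ combined with Lemma~\ref{lem:doesnotcontain}; the case of a general $\phi$ is then reduced to $\stdphi$ via Corollary~\ref{cor:rhoPPP}. You instead argue abstractly and uniformly in $\phi$: injectivity from simplicity and normal-subgroup considerations; $\Psi(\thePSU)=\dottttAB$ from solvability of $\OG(\RAB)\cong D_{12}$ together with the orbit--stabilizer count $|\dottttAB|=|\dotzero|/|\AAA|=\theD$; and the index from the chain $S_3\cong H\le G\le D_{12}$ plus the fact that the central involution $-\mathrm{id}$ of $\OG(\RAB)$ lies in no subgroup isomorphic to $S_3$. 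Where correct, this is attractive: it needs neither the explicit matrices nor the Plesken--Pohst computation (indeed it re-derives $|\circtttAB|=12\theD$), and it treats all Edge correspondences at once. One small repair: in your argument that $\Ker\tauTTT$ is trivial, preservation of $\nuTTT$ under translation by $c$ gives only $2\intf{\midpt T, c}+\intf{c,c}=0$ for all $T$, not the two separate vanishings you assert; but constancy of $\intf{\midpt T, c}$ in $T$ gives $c\perp\diffTLatAB$, while $c=\psi(T)-T$ itself lies in $\diffTLatAB$, so $\intf{c,c}=0$ and $c=0$ by positive definiteness.

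There is, however, one genuine gap, in the step ``$\Ker\Psi\cap\thePSU=1$ implies $\Ker\Psi=1$''. This requires that the centralizer $C:=C_{\thePGamU}(\thePSU)$ be trivial (equivalently, that $\thePSU$ is the unique minimal normal subgroup of $\thePGamU$), and your parenthetical justification --- that the quotient $\thePGamU/\thePSU$ is $S_3$ --- does not prove it: the group $\thePSU\times S_3$ has the same quotient and has centralizer $S_3$. What is actually needed is that no element of $\thePGamU\setminus\thePSU$ acts trivially on $\thePSU$ by conjugation, i.e.\ that the diagonal and field automorphisms are realized nontrivially; this is true and standard ($\mathrm{Out}(\thePSU)\cong S_3$, so that $\thePGamU$ is the full $\Aut(\thePSU)$ acting faithfully), but it must be either cited or proved. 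A self-contained fix: $C\cap\thePGU=1$, because a centralizing element of $\thePGU$ yields, via commutators landing in the scalars, a homomorphism from the perfect group $\mathrm{SU}(6,\Ff)$ to $\Ff\sptimes$, hence commutes with $\mathrm{SU}(6,\Ff)$ itself and is scalar by Schur's lemma (absolute irreducibility); then $C$ is normal of order at most $2$, hence central, and a central element $g\gamma$ would force conjugation by $g$ to equal the Frobenius on all of $\thePGU$, which is impossible since conjugation preserves $\detsprime$ while Frobenius squares it. The gap is load-bearing: if $\Ker\Psi$ were nontrivial, your $H$ would be a proper quotient of $S_3$, the bound $[G:H]\le 2$ would evaporate, and both the injectivity and the index-$2$ conclusions would fail. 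Note that the paper's computational route avoids this issue entirely: the identity $\psi_0=i\circ\detsprime$, verified on generators, pins down $\Ker\Psi_0$ without any input about $\mathrm{Out}(\thePSU)$.
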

\begin{corollary}\label{cor:rhoPPP}
The homomorphism $\rhoPPP$ 
is an isomorphism.
\end{corollary}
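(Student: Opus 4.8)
The plan is to obtain Corollary~\ref{cor:rhoPPP} from Theorem~\ref{thm:main2} by supplying the one extra fact that the composite there has image of index \emph{exactly} $2$, i.e. that the arrow $\tauTTT$ is not surjective. Denote by $F\colon\thePGamU\to\circtttAB$ the composite homomorphism of Theorem~\ref{thm:main2}, and write $H\colon \Aut(\PPPX,\nuPPP)\to\circtttAB$ for its part after $\rhoPPP$ (first transport by $\phi$, then apply $\tauTTT$, then $\sigmaAB\inv$), so that $F=H\circ\rhoPPP$. Since $F$ is injective and $\rhoPPP$ is its first factor, $\rhoPPP$ is injective, so $|\rhoPPP(\thePGamU)|=|\thePGamU|=6\theD$; it remains to prove surjectivity. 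The outer arrows in $H$ are isomorphisms and $\tauTTT$ will be seen to be injective, so $H$ is injective. As $H$ is injective, $\rhoPPP$ is onto iff $H(\Aut(\PPPX,\nuPPP))=H(\rhoPPP(\thePGamU))$, i.e. iff the image of $H$ equals the image of $F$; since $[\circtttAB:\,\text{image of }F]=2$ and the image of $H$ contains the image of $F$, this happens iff the image of $H$ is not all of $\circtttAB$. The element $-1\in\dotzero$ (multiplication by $-1$) lies in $\circtttAB$ and satisfies $\sigmaAB(-1)=-\mathrm{id}$ on $\RABsperp$, so it suffices to show $-\mathrm{id}\notin\tauTTT(\Aut(\TTTAB,\nuTTT))$.

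For $T\in\TTTAB$ let $w_T\in\RABsperp\tensor\Q$ be the orthogonal projection of $\midpt T$. By Lemma~\ref{lem:Lam22} we have $\midpt T-\midpt T'=T-T'\in\diffTLatAB=\RABsperp$, hence $w_T-w_{T'}=T-T'$; in particular $T\mapsto w_T$ is injective (if $w_T=w_{T'}$ then $T-T'\in\RAB\tensor\Q\cap\RABsperp\tensor\Q=0$), and by definition $\tauTTT(\psi)$ sends $w_T-w_{T'}$ to $w_{\psi(T)}-w_{\psi(T')}$. Next I would check that $\sum_T w_T=0$: this sum is fixed by $\dottttAB$, which fixes $O,A,B$ (hence $\midpt$) and permutes $\TTTAB$, thereby permuting the $w_T$ through its isometry of $\RABsperp$; since $\dottttAB\cong\thePSU$ acts on the $22$‑dimensional space $\RABsperp\tensor\Q$ with no nonzero fixed vector, the sum vanishes. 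Consequently, for every $\psi\in\Aut(\TTTAB,\nuTTT)$ the vector $\tauTTT(\psi)(w_T)-w_{\psi(T)}$ is independent of $T$ and sums to $0$, so
\[
\tauTTT(\psi)(w_T)=w_{\psi(T)}\qquad\text{for all }T .
\]
Taking $\tauTTT(\psi)=\mathrm{id}$ gives $\psi=\mathrm{id}$, so $\tauTTT$ is injective, as used above.

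Now the parity argument. Suppose $\tauTTT(\psi)=-\mathrm{id}$. Then $w_{\psi(T)}=-w_T$, so $\psi$ is an involution of the finite set $\TTTAB$, and $T$ is a fixed point iff $w_T=0$, i.e. iff $\midpt T\in\RAB\tensor\Q$, i.e. iff $T\in\RAB\tensor\Q$. Such a $T$ would be a point of the plane $\RAB\tensor\R$ equidistant from $O,A,B$, hence the circumcentre of the regular triangle $OAB$, which lies at distance $2/\sqrt{3}\neq 2$ from each vertex; so no fixed point exists. Thus $\psi$ would be a fixed‑point‑free involution of $\TTTAB$, contradicting $|\TTTAB|=891$ being odd. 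Hence $-\mathrm{id}\notin\tauTTT(\Aut(\TTTAB,\nuTTT))$, the image of $H$ is a proper subgroup of $\circtttAB$ containing the index‑$2$ subgroup $F(\thePGamU)$, so it equals $F(\thePGamU)$; therefore $\rhoPPP$ is surjective and, being injective, is an isomorphism.

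The one step that carries real content is the vanishing $\sum_T w_T=0$, which I expect to be the main obstacle: it relies on $\dottttAB\cong\thePSU$ having no nonzero invariant vector in $\RABsperp\tensor\Q$ (equivalently, the irreducibility of this $22$‑dimensional representation). Granting that, the failure of central symmetry is forced purely by the odd cardinality $891$, and everything else is formal bookkeeping with the index‑$2$ statement of Theorem~\ref{thm:main2}.
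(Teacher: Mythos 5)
Your reduction is structurally sound: granting Theorem~\ref{thm:main2}, the corollary does follow once one knows that $\tauTTT$ is injective and that $-I_{22}\notin\Image\tauTTT$ (the index-two sandwich argument is correct, and your parity argument --- a fixed-point-free involution cannot exist on a set of odd cardinality $891$ --- is a clean way to exclude $-I_{22}$). The genuine gap is the one you flag yourself: the vanishing $\sum_{T\in\TTTAB}w_T=0$, on which \emph{both} of these facts rest in your write-up (injectivity of $\tauTTT$ as well as the exclusion of $-I_{22}$ pass through the identity $\tauTTT(\psi)(w_T)=w_{\psi(T)}$), is never proved. Your justification --- that $\dottttAB\cong\thePSU$ has no nonzero invariant vector in $\RABsperp\tensor\Q$ --- does not follow from anything available in the paper: Theorem~\ref{thm:main2} gives the abstract isomorphism, hence simplicity, but a faithful representation of a simple group can perfectly well contain a trivial summand, so simplicity alone proves nothing. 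What you actually need is that $\thePSU$ has no nontrivial irreducible representation of dimension at most $21$; this is true, but it is external character-theoretic input (ATLAS/Landazuri--Seitz) that appears nowhere in the paper.

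The gap is, however, easy to close with the paper's own data. Every $T\in\TTTAB$ satisfies $\intf{T, A}=\intf{T, B}=2$, so the orthogonal projection of $T$ to $\RAB\tensor\Q$ is $(A+B)/3$; hence $w_T=T-(A+B)/3$, and $\sum_{T}w_T=0$ is equivalent to $\sum_{T\in\TTTAB}T=297(A+B)$. This follows at once from the explicit list of the $891$ points (types $0$--$3$): sign-symmetry kills the first $21$ coordinates, and the last three coordinates add up to $(1188,1188,-2376)=297\,(4,4,-8)$, as required. With that supplied, your proof is complete and genuinely different from the paper's: the paper does not deduce the corollary from Theorem~\ref{thm:main2} but proves the two simultaneously, combining injectivity of $\Psi_0$ (explicit matrices, the determinant homomorphism, simplicity of $\thePSU$), the Plesken--Pohst order $|\OG(\Lamtt)|=12\theD$, and Lemma~\ref{lem:doesnotcontain}, which plays exactly the role of your key step but is proved via the injective map $s_0$ on orthogonal pairs $[T,T\sprime]$ rather than by a parity count. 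One final caution: in the paper, Theorem~\ref{thm:main2} for an \emph{arbitrary} Edge correspondence is itself deduced from this corollary, so to avoid circularity you should invoke the theorem only for the single correspondence $\stdphi$, for which the paper proves it directly; your argument needs only one $\phi$, so this costs nothing, but it must be said.
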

\begin{corollary}\label{cor:Lam22}
$\OG(\Lamtt)/\{\pm I_{22}\}\cong \thePGamU$.
\end{corollary}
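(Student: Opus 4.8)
The plan is to transport everything to $\OG(\Lamtt)$ through the isomorphism $\sigmaAB$ of Lemma~\ref{lem:Lam22}, and then to show that the index-$2$ image produced by Theorem~\ref{thm:main2} is a complement to the central subgroup $\{\pm I_{22}\}$. First I would record what $\{\pm I_{22}\}$ pulls back to: the element $-I_{24}\in\dotzero=\OG(\Leech)$ acting as multiplication by $-1$ on all of $\Leech$ preserves $\RAB$ and restricts to $-I_{22}$ on $\RABsperp=\diffTLatAB$, so it lies in $\circtttAB$ and satisfies $\sigmaAB(-I_{24})=-I_{22}$. Since $\sigmaAB$ is an isomorphism, $\sigmaAB\inv(\{\pm I_{22}\})=\{\pm I_{24}\}$, a central subgroup of order $2$ in $\circtttAB$.

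Next, let $G\subseteq\circtttAB$ denote the image of the composite homomorphism of Theorem~\ref{thm:main2}. By that theorem $G\cong\thePGamU$ and $[\circtttAB:G]=2$, so $|\circtttAB|=2\,|\thePGamU|=12\,\theD$. Because $\{\pm I_{24}\}$ is central of order $2$ and $G$ has index $2$, the quotient map $\circtttAB\to\circtttAB/\{\pm I_{24}\}$ restricts to an isomorphism $G\isom\circtttAB/\{\pm I_{24}\}$ precisely when $-I_{24}\notin G$; in that case, applying $\sigmaAB$ gives $\thePGamU\cong G\cong\circtttAB/\{\pm I_{24}\}\cong\OG(\Lamtt)/\{\pm I_{22}\}$, which is the assertion. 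Thus the whole corollary reduces to the single claim $-I_{24}\notin G$.

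To prove this claim I would use the action of $\circtttAB$ on the rank-$2$ lattice $\RAB$. Its Gram matrix in the basis $A,B$ is $\left(\begin{smallmatrix}4&2\\2&4\end{smallmatrix}\right)$, so $\RAB$ is a scaled copy of $A_2$ and $\OG(\RAB)\cong S_3\times\{\pm1\}$, the central factor being $-1$. The restriction map $\circtttAB\to\OG(\RAB)$ has kernel exactly $\dottttAB$ (an element fixing $A$ and $B$ fixes $\RAB$ pointwise, and conversely), and $-I_{24}$ maps to the central involution $-1$. On the other hand $\dottttAB\subseteq G$, since by Theorem~\ref{thm:main2} the subgroup $\thePSU\subseteq\thePGamU$ maps isomorphically onto $\dottttAB$; hence the image of $G$ in $\OG(\RAB)$ is $G/\dottttAB\cong\thePGamU/\thePSU$. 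Counting orders gives $|\thePGamU/\thePSU|=6$, and one identifies this quotient as $S_3$: the diagonal automorphisms give $\thePGU/\thePSU\cong C_3$ (as $\gcd(6,q+1)=3$ for $q=2$), while the Frobenius generator of $\thePGamU/\thePGU\cong C_2$ acts on this $C_3$ by inversion, since it sends a norm-one scalar (a cube root of unity) to its square, i.e.\ to its inverse. Therefore the image of $G$ in $\OG(\RAB)$ is isomorphic to $S_3$, which has trivial centre, so it cannot contain the central involution $-1$ of $\OG(\RAB)$. Consequently $-I_{24}\notin G$, completing the argument.

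The routine parts are the order bookkeeping and the elementary fact that an index-$2$ subgroup avoiding a central involution is a complement to it. I expect the only real obstacle to be the last structural point, namely that $\thePGamU/\thePSU$ is the nonabelian $S_3$ rather than the cyclic $C_6$: were it cyclic, its image in $\OG(\RAB)$ would contain $-1$ and the decomposition would fail. This is exactly where the fact that the field automorphism inverts the group of diagonal automorphisms of $\thePSU$ (equivalently, that $\mathrm{Out}(\thePSU)\cong S_3$) is essential, and I would take care to justify it from the Frobenius action on the determinant valued in the scalar group $\mu_3$ rather than merely quoting it.
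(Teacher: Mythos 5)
Your proof is correct, and it reaches the corollary by a genuinely different route from the paper. The paper does not deduce Corollary~\ref{cor:Lam22} from the \emph{statement} of Theorem~\ref{thm:main2}; both are established simultaneously inside one proof, where the two decisive inputs are (i) Plesken--Pohst's computation $|\OG(\Lamtt)|=12\theD$, which via Lemma~\ref{lem:Lam22} gives $|\circtttAB|=12\theD$, and (ii) Lemma~\ref{lem:doesnotcontain}, the combinatorial fact that the image of $\tauTTT$ cannot contain $-I_{22}$ (no automorphism of $(\TTTAB,\nuTTT)$ can swap every pair $[T,T\sprime]$ with $\intf{T, T\sprime}=0$); together with the injectivity of $\Psi_0$ these show $\circtttAB$ is generated by $\Image\Psi_0\cong\thePGamU$ and the central $-1$, whence the corollary. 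You instead take Theorem~\ref{thm:main2} as a black box and show the corollary is a purely formal consequence of it: the clause $\thePSU\cong\dottttAB$ places $\dottttAB$ inside $G=\Image\Psi_{\phi}$, so the image of $G$ in $\OG(\RAB)$ is $G/\dottttAB\cong\thePGamU/\thePSU\cong S_3$, which is centerless and hence cannot contain the central involution $-1$ of $\OG(\RAB)\cong S_3\times\{\pm 1\}$; therefore $-I_{24}\notin G$, and the index-$2$ subgroup $G$ is a complement to $\{\pm I_{24}\}$. All your individual steps check out, including the Gram matrix of $\RAB$, the identification of the kernel of $\circtttAB\to\OG(\RAB)$ with $\dottttAB$, and the fact that a subgroup with trivial center cannot contain a nontrivial element that is central in the ambient group. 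Your identification $\thePGamU/\thePSU\cong\Ff\sptimes\semidirectproduct\Gal(\Ff/\Ft)\cong S_3$ is exactly the paper's homomorphism $\detsprime$ (whose kernel the paper notes is $\thePSU$), so this ingredient is present in the text, but the paper uses it only to prove injectivity of $\Psi_0$, not to exclude $-1$ from the image. What each approach buys: the paper must do the combinatorial work of Lemma~\ref{lem:doesnotcontain} and cite Plesken--Pohst anyway, since those are needed to prove Theorem~\ref{thm:main2} itself (in particular the index-$2$ claim); your argument shows that, once Theorem~\ref{thm:main2} is granted, neither of these needs to be invoked a second time --- the corollary follows from abstract group theory alone, with the centerlessness of $\thePGamU/\thePSU$ doing precisely the job that Lemma~\ref{lem:doesnotcontain} does in the paper.
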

%
%
\par
We prove Theorem~\ref{thm:main1}
in Section~\ref{subsec:main1}
by presenting an example $\stdphi$ of the Edge correspondence.
Then 
we write  the homomorphism  $\thePGamU \to \circtttAB$  in Theorem~\ref{thm:main2} for $\phi=\stdphi$
in terms of matrices (Theorem~\ref{thm:main3}).
Using these matrices,
we prove Theorem~\ref{thm:main2} and Corollaries  in Section~\ref{subsec:main2}.
Since we state our results  explicitly,
most of them  can be checked by direct computation.
The computational data is available from the author's webpage~\cite{ShiCompEdge}.
For the computation,
we  used {\tt GAP}~\cite{GAP}.
\par
\medskip
Thanks are due to Professor Tetsuji Shioda and Professor Shigeru Mukai 
for discussions, and 
to Professor Ivan Cheltsov for inviting  me to write this paper.
\section{Constructing an Edge correspondence}\label{sec:bijection}
\subsection{Notation}
Let $S$ be a finite set.
For a subset $W\subset S$, let $v_W\colon S\to \Ft$ denote 
the function such that $v_W\inv(1)=W$.
By $W\mapsto v_W$,
we equip the power set $2^S$ of $S$ with a structure of the $\Ft$-vector space.
We thus regard 
a linear binary code on $S$
as a non-empty subset of $2^S$ closed under the symmetric difference.
\subsection{Fixing a basis of $\Leech$}
In order to be explicit,
we fix a basis of the Leech lattice $\Leech$.
Let $M:=\{1,2,\dots, 24\}$ be the set of positions of the MOG (Miracle Octad Generators, see Chapter~11 of~\cite{CSbook})
indexed by the following diagram.
$$
\begin{array}{|cc|cc|cc|}
\hline
\phantom{0}1 & \phantom{0}5 & \phantom{0}9  & {13} & {17} & {21} \\
\phantom{0}2 & \phantom{0}6 & {10}  & {14} & {18} & {22} \\
\hline
\phantom{0}3 & \phantom{0}7 & {11}  & {15} & {19} & {23} \\
\phantom{0}4 & \phantom{0}8 & {12}  & {16} & {20} & {24}\\
\hline
\end{array}
$$
Let $\Z^M$ be the $\Z$-module  of functions from $M$ to $\Z$, and we equip $\Z^M$ 
with the inner product
$$
(x_1 y_1+\cdots+x_{24} y_{24})/8.
$$
We define $\Leech$ to be the sublattice of $\Z^M$ generated by 
the row vectors  
of the matrix $\BLam$ in Figure 4.12 of~\cite{CSbook}, with the scalar multiplication  $1/\sqrt{8}$ removed.
Each point of $\Leech$
is written as a row vector with respect to the standard basis of $\Z^M$.
Hence each element of $\dotzero$
acts on $\Leech$  from the \emph{right},
and  is expressed  by a $24\times 24$ orthogonal matrix $g$ with components in $\Q$
whose  action  on $\Z^M\tensor \Q$
preserves $\Leech \subset \Z^M\tensor \Q$;
that is, we have 
$$
\dotzero=\set{g\in \GL_{24}(\Q)}{ g\cdot \mattranspose g=I_{24}, \;\;  \BLam \cdot g \cdot \BLam\inv \in  \GL_{24}(\Z)}. 
$$
%
%
%
\subsection{Two binary codes of length $21$}\label{subsec:21}
Let $\P^2$ be a projective plane over $\Ft$.
We denote by $S$ the set $\P^2(\Ff)$ of $\Ff$-rational points of $\P^2$.
Let $\HHH\subset 2^S$ be the linear code  generated by the codewords $\ell(\Ff)\subset S$ of weight $5$,
where $\ell$ runs through the set of $\Ff$-rational lines on $\P^2$,
and $\ell(\Ff)$ is the set of $\Ff$-rational points of $\ell$.
Then  $\HHH$ is of dimension $10$.
Let $\PGamL(3, 4)$ be the subgroup of the full symmetric group of $S$ generated by
the permutations induced by the actions of $\PGL(3, 4)$ and  $\Gal(\Ff/\Ft)$.
It is obvious that the group $\PGamL(3, 4)$  of order $120960$ acts on the linear code $\HHH$.
\par
Let $\CCC_{24}\subset 2^M$ be the extended binary Golay code
defined by MOG (see Chapter~11 of~\cite{CSbook}).
It is well-known that 
the automorphism group of $\CCC_{24}$ is the Mathieu group $\M_{24}$.
We put $M\sprime:=\{1,2,\dots, 21\}\subset M$, and let $\pr_{21}\colon 2^M\to 2^{M\sprime}$ be the natural projection.
We then put
$$
\CCC\sprime_{21}:=\set{w\in \CCC_{24}}{w(22)=w(23)=w(24)},
\quad
\CCC_{21}:=\pr_{21}(\CCC\sprime_{21}).
$$
\par
%
The following  well-known fact explains the  isomorphism between 
 $\PSL(3, 4)$ and the Mathieu group $\M_{21}=\shortset{\sigma\in \M_{24}}{n^\sigma=n\;\textrm{for}\; n=22,23,24}$.
\begin{proposition}\label{prop:code}
{\rm(1)} The binary codes $\HHH$ and $\CCC_{21}$ are isomorphic.
The weight distribution of these codes is
$$
0^1\, 5^{21} \, 8^{210} \, 9^{280} \,12^{280} \, 13^{210}\, 16^{21} \, 21^1.
$$
{\rm(2)} The automorphism group of $\HHH$ is equal to $\PGamL(3, 4)$.
In particular, there exist exactly $|\PGamL(3, 4)|=120960$ isomorphisms between $\HHH$ and $\CCC_{21}$.
\end{proposition}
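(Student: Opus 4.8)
The plan is to route everything through the extended Golay code $\CCC_{24}$ and the classical description of the $\Ff$-plane as the contraction of the octad design at the triad $\{22,23,24\}$. First I would pin down $\CCC_{21}=\pr_{21}(\CCC\sprime_{21})$. Since the minimum weight of $\CCC_{24}$ is $8$, no nonzero codeword is supported on $\{22,23,24\}$, so $\pr_{21}$ restricts to an injection on $\CCC\sprime_{21}$ and $\CCC_{21}\cong\CCC\sprime_{21}$. The two linear conditions $w(22)=w(23)$ and $w(23)=w(24)$ cut the $12$-dimensional code $\CCC_{24}$ down to dimension $10$, so $\dim\CCC_{21}=10=\dim\HHH$. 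For $w\in\CCC\sprime_{21}$ write $\epsilon:=w(22)=w(23)=w(24)\in\Ft$: if $\epsilon=0$ then $w$ is supported in $M\sprime$ and $\pr_{21}(w)$ has the same weight as $w$, while if $\epsilon=1$ the weight drops by $3$. Since the only weights occurring in $\CCC_{24}$ are $0,8,12,16,24$ and complementation by the all-ones word is an automorphism (inducing the symmetry $k\leftrightarrow 21-k$ on $\CCC_{21}$), the projected weights are exactly $0,8,12,16$ from $\epsilon=0$ and $5,9,13,21$ from $\epsilon=1$.

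To obtain the multiplicities I would compute three intersection numbers with the triad. Using the block counts of the octad Steiner system (namely $759$ octads, with $253$, $77$, $21$ octads through a fixed point, pair, and triple) a short inclusion--exclusion gives $21$ octads containing $\{22,23,24\}$ and $210$ octads disjoint from it; the analogous count for the $2576$ dodecads gives $280$ containing the triad. Pairing each class with its complement then produces exactly the distribution $0^1\,5^{21}\,8^{210}\,9^{280}\,12^{280}\,13^{210}\,16^{21}\,21^1$ for $\CCC_{21}$.

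Next I would build the isomorphism. The $21$ octads through $\{22,23,24\}$ are precisely the weight-$8$ words with $\epsilon=1$; each has the form $\{22,23,24\}\cup\ell\sprime$ with $\ell\sprime\subset M\sprime$ of size $5$, and $\pr_{21}$ sends it to the weight-$5$ word $v_{\ell\sprime}\in\CCC_{21}$. By the contraction of the octad design at a triad (Chapter~11 of~\cite{CSbook}), these $21$ traces are the lines of a projective plane of order $4$ on $M\sprime$ isomorphic, as an incidence structure, to $\P^2(\Ff)$ with its $\Ff$-lines; fixing such an isomorphism yields a bijection $S\isom M\sprime$ carrying each line $\ell(\Ff)$ to one of the traces $\ell\sprime$. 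The induced linear isomorphism $2^S\isom 2^{M\sprime}$ sends the generators of $\HHH$ to these weight-$5$ words, hence maps $\HHH$ onto a $10$-dimensional subcode of the $10$-dimensional code $\CCC_{21}$; therefore $\HHH\isom\CCC_{21}$, and the two codes share the weight distribution just computed.

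For part~(2) one inclusion is already noted, so it remains to prove $\Aut(\HHH)\subseteq\PGamL(3,4)$. The weight distribution gives exactly $21$ minimum-weight codewords, and the $21$ lines already account for all of them, so the weight-$5$ words are precisely the lines. Any automorphism of $\HHH$ permutes these, hence sends lines of $\P^2(\Ff)$ to lines and is a collineation; by the fundamental theorem of projective geometry the collineation group of the Desarguesian plane $\P^2(\Ff)$ is $\PGamL(3,4)$, giving the desired inclusion and thus equality. Finally, since part~(1) exhibits at least one isomorphism, the set of isomorphisms $\HHH\isom\CCC_{21}$ is a torsor under $\Aut(\HHH)$, so there are exactly $|\PGamL(3,4)|=120960$ of them. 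The main obstacle is the geometric input in the third step---that the octad traces through a triad genuinely assemble into $\P^2(\Ff)$ with full automorphism group $\PGamL(3,4)$, equivalently the isomorphism $\M_{21}\cong\PSL(3,4)$; this is the substantive classical fact, while all the surrounding counts can be confirmed by the direct computation recorded in~\cite{ShiCompEdge}.
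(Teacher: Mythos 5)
Your proof is correct, but it is worth noting that the paper itself offers no proof of Proposition~\ref{prop:code}: the statement is introduced as a ``well-known fact'' explaining the isomorphism $\PSL(3,4)\cong\M_{21}$, with Chapter~11 of~\cite{CSbook} as the implicit source. What you have written is essentially the classical argument behind that citation, and it is sound: injectivity of $\pr_{21}$ on $\CCC\sprime_{21}$ from the minimum weight $8$ of $\CCC_{24}$; the weight distribution from the Steiner system counts ($21$ octads containing the triad, $210$ disjoint from it, $280$ dodecads containing it, paired off by complementation); identification of the $21$ projected octads with the lines of a projective plane of order $4$ on $M\sprime$; equality of the two $10$-dimensional codes by a dimension count; and the fundamental theorem of projective geometry plus the ``minimum-weight words are exactly the lines'' observation to get $\Aut(\HHH)=\PGamL(3,4)$, whence the torsor count of $120960$ isomorphisms. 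Two points you pass over quickly but which hold: (i) your claim that the two linear conditions cut $\dim\CCC_{24}=12$ down to exactly $10$ needs that the functionals $w\mapsto w(22)+w(23)$ and $w\mapsto w(23)+w(24)$ are independent on $\CCC_{24}$, which follows from self-duality (no weight-$2$ word lies in $\CCC_{24}=\CCC_{24}^{\perp}$), or simply from your own count $1+21+210+280+280+210+21+1=1024=2^{10}$; (ii) identifying the contracted design with $\P^2(\Ff)$ itself, rather than with an arbitrary $S(2,5,21)$, requires the uniqueness of the projective plane of order $4$ (equivalently, the substance of $\M_{21}\cong\PSL(3,4)$), which you correctly isolate as the one genuinely nontrivial classical input. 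What your route buys over the paper's bare citation is a self-contained verification of both the weight distribution and the automorphism group; what it costs is reliance on that uniqueness theorem, which the paper sidesteps entirely by treating the whole proposition as known.
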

\subsection{Proof of Theorem~\ref{thm:main1}}\label{subsec:main1}
The condition $\nuPPP(\Pi, \Pi\sprime)=\nuTTT(\phi(\Pi), \phi(\Pi\sprime))$
required for an Edge correspondence $\phi\colon \PPPX\isom \TTTAB$
is equivalent to the condition that the third  $\mathord{\Longleftrightarrow}$ in each row of the following table should hold:
$$
\begin{array}{ccccccc}
\dim (\Pi\cap \Pi\sprime) && |\Pi(\Ff)\cap\Pi\sprime(\Ff)| && \Pi\cdot \Pi\sprime && \intf{\phi(\Pi), \phi(\Pi\sprime)}\\
\hline
-1 &\Longleftrightarrow&0&\Longleftrightarrow&0 &\Longleftrightarrow & 1\phantom{.}\\
0&\Longleftrightarrow&1&\Longleftrightarrow&1 &\Longleftrightarrow & 2\phantom{.}\\
1 &\Longleftrightarrow&5&\Longleftrightarrow&-1 &\Longleftrightarrow & 0\phantom{.}\\
2 &\Longleftrightarrow&21&\Longleftrightarrow&3 &\Longleftrightarrow & 4.
\end{array}
$$
\par
We construct  an example $\stdphi $
of the Edge correspondence 
explicitly.
Let $\omega\in \F_4$ be a root of $x^2+x+1=0$, and 
let $\Pi_0, \Pi_{\infty} \in \PPPX$ be the planes specified by three points on them as follows:
\begin{eqnarray*}
\phantom{aaa} \Pi_0\,\,&=&\gen{\;\; (1:0:0:\omega:0:0), \;\;(0:1:0:0:\omega:0), \;\; (0:0:1:0:0:\omega)\;\;}, \label{eq:fixPP1}\\
\phantom{aaa} \Pi_{\infty}&=&\gen{\;\;(1:0:0:\baromega:0:0), \;\; (0:1:0:0:\baromega:0), \;\; (0:0:1:0:0:\baromega)\;\;}. \label{eq:fixPP2}
\end{eqnarray*}
Note that $\nuPPP(\Pi_0, \Pi_{\infty})=0$.
Then there exist exactly $21$ planes $\Pi_s\in \PPPX$ such that
$$
\dim (\Pi_0\cap \Pi_s)=1,\quad 
\dim (\Pi_{\infty}\cap \Pi_s)=-1.
$$
Let $S=\{\Pi_1, \dots, \Pi_{21}\}$ be the set of these  $21$ planes.
The mapping $\Pi_s\mapsto \Pi_s\cap \Pi_0$
establishes a bijection from $S$ to the set 
$\Pi_0\dual(\Ff)$
of  $\F_4$-rational lines on $\Pi_0$,
where $\Pi_0\dual$ is the dual plane of $\Pi_0$.
Via this bijection and $\Pi_0\dual\cong\P^2$, 
the notation $S=\{\Pi_1, \dots, \Pi_{21}\}$   is compatible with the notation $S=\P^2(\Ff)$ in Section~\ref{subsec:21}.
For each $\Ff$-rational point $P$ of $\Pi_0$,
the codeword
$$
\ell_P:=\set{\Pi_s\in S}{P\in \Pi_s}\in 2^S
$$
is of weight $5$.
Each codeword $\ell_P$ is an $\Ff$-rational  line of $S=\Pi_0\dual(\Ff)$, and 
these $21$ codewords $\ell_P$
generate a linear code $\HHH_X\subset 2^S$ of dimension $10$.
\par
Recall that each vector of $\Leech$ is written as a row vector with respect to the standard basis of $\Z^{M}$.
Since $\dotzero$ acts on $\AAA$ transitively (see Chapter~10 of~\cite{CSbook}), 
we can assume without loss of generalities that 
the fixed lattice points  $A, B\in \Leech$ are 
\begin{equation}\label{eq:fixAB}
A=(0^{21}, 4, 0, -4), \quad B=(0^{21}, 0, 4, -4),
\end{equation}
where $0^{21}$ is the constant map to $0$ from $M\sprime=\{1, \dots, 21\}\subset M$,
and the last three coordinates indicates the values at $22,23,24\in M$.
In the following, we will use this choice of $A$ and $B$.
It is easy to calculate  the set $\TTTAB$.
It turns out that  $\TTTAB$ consists of the following $891$ vectors:
$$
\begin{array}{lll}
\textrm{one  element of type} & (0^{21}, 4,4,0) & \textrm{(type 0),} \\
\textrm{$42$ elements of  type} & (0^{20} (\pm 4)^1, 0,0,-4) & \textrm{(type 1),}\\
\textrm{$336$  elements of  type} & (0^{16} (\pm 2)^5, 2, 2, -2) & \textrm{(type 2),} \\
\textrm{$512$  elements of  type} & ((\pm 1)^{21}, 1,1,-3)  & \textrm{(type 3)},
\end{array}
$$
where, for example, $0^{16} (\pm 2)^5$ indicates a map from $M\sprime$ to $\{0, \pm 2\}$
whose fiber over $0$ is of cardinality $16$.
These vectors are described more precisely as follows.
\begin{itemize}
\item For a vector of type 1, the position of $(\pm 4)^1$ is arbitrary in $M\sprime$ and the sign is also arbitrary.
\item For a vector of type 2, the positions of $(\pm 2)^5$ form a subset $W$ of  $M\sprime$ such that $W\cup\{22,23,24\}$
is an octad of the Golay code $\CCC_{24}$,  and the number of minus sign is odd.
\item For a vector of type 3, the positions of minus sign in $(\pm 1)^{21}$ form a subset $W$ of  $M\sprime$
that is a code word of $\CCC_{24}$ disjoint from $\{22,23,24\}$.
\end{itemize}
Let $T_0\in \TTTAB$ and $T_{\infty}\in \TTTAB$ be the lattice points
\begin{equation*}\label{eq:fixTT}
T_0=(0^{21}, 4, 4, 0), \quad T_{\infty}=(1^{21}, 1,1,-3).
\end{equation*}
Note that $\nuTTT(T_0, T_{\infty})=0$.
The set 
 of all $T\in \TTTAB$ satisfying   
$$
\intf{T_0, T}=0, \quad \intf{T_{\infty}, T}=1
$$
is equal to the set  of the points 
$$
T_{i}=(0^{20} (-4), 0,0, -4),
$$
where $-4$ is located at the $i$th position for $i=1, \dots, 21$.
By $T_i\mapsto i$,
we identify this set  with $M\sprime=\{1, \dots, 21\}$.
A point $T\in \TTTAB$  satisfies $\intf{T, T_0}=2$ if and only if $T$ is of type 2 above,
and when this is the case, the codeword
$$
F_T:=\set{T_i\in M\sprime}{\intf{T, T_i}\in \{0, 2\}}\;\; \in \;\; 2^{M\sprime}.
$$
is equal to the set of positions
$j$ such that the $j$th component of $T$ is $\pm 2$.
Hence, by the definition of  $\CCC_{21}$, 
 the binary code generated by these  $F_T$  is equal to $\CCC_{21}$.
\par
By Proposition~\ref{prop:code}, 
there exist exactly $120960$  bijections $\varphi\colon  S\isom M\sprime$
that induce $\HHH_X\cong \CCC_{21}$.
We choose the following isomorphism $\varphi_0$.
As noted above,
we identify $S=\{\Pi_1, \dots, \Pi_{21}\}$  with $\Pi_0\dual (\F_4)$.
We use $(x_4:x_5:x_6)$ as the homogeneous coordinates of $\Pi_0$,
and let $[\xi_4:\xi_5:\xi_6]$ be the homogeneous coordinates of $\Pi_0\dual$ dual to $(x_4:x_5:x_6)$.
Then the map $\varphi_0\colon  S=\Pi_0\dual (\F_4)\isom M\sprime$  given by
the diagram below 
induces   $\HHH_X\cong \CCC_{21}$.
$$
\renewcommand{\arraystretch}{1.2}
\begin{array}{|cc|cc|cc|}
\hline
{[}0:1:0{]} & {[}1:0:0{]} & {[}1:\baromega:0{]}  & {[}1:\omega:0{]} & {[}1:1:0{]} & {[}0:0:1{]} \\
{[}0:1:1{]} & {[}1:0:1{]} & {[}1:\baromega:\omega{]}  & {[}1:\omega:\baromega{]} & {[}1:1:1{]}&\\
\hline
{[}0:1:\omega{]} & {[}1:0:\omega{]} &  {[}1:\baromega:\baromega{]}  & {[}1:\omega:1{]} & {[}1:1:\omega{]} & \\
{[}0:1:\baromega{]}  & {[}1:0:\baromega{]}& {[}1:\baromega:1{]} & {[}1:\omega:\omega{]}  &  {[}1:1:\baromega{]}&\\
\hline
\end{array}
$$
\par
We define $\stdphi$ on the subset $\{\Pi_0, \Pi_{\infty}\}\cup S$ of $\PPPX$ 
by  $\stdphi(\Pi_0)=T_0$, $\phi(\Pi_{\infty})=T_{\infty}$,
and $\stdphi |S=\varphi_0$.
Then it is a matter of simple calculation to show that, 
for each plane $\Pi\in \PPPX\setminus (\{\Pi_0, \Pi_{\infty}\}\cup S)$,
there exists a unique  point $\stdphi(\Pi)\in \TTTAB$ that satisfies
$\nuPPP(\Pi, \Pi\sprime)=\nuTTT(\stdphi(\Pi), \stdphi(\Pi\sprime))$
for all $\Pi\sprime \in \{\Pi_0, \Pi_{\infty}\}\cup S$,
 and that the map 
$\stdphi\colon \PPPX\to \TTTAB$ thus constructed  
is an Edge correspondence.
\par
We describe the vector representation $(x_1, \dots, x_{24})\in \Z^M$ of 
$\stdphi(\Pi)\in \TTTAB$
for each $\Pi\in \PPP_X$ in the table below,
where $\delta_{0}=\dim(\Pi\cap \Pi_{0} )$, $\delta_{\infty}=\dim(\Pi\cap \Pi_{\infty} )$,
and, for example, the entry $280: (-1)^{12} 1^9$ means that
there exist exactly $280$ planes $\Pi\in \PPPX$ with $\delta_0=\delta_{\infty}=-1$,  
and that they are mapped to the lattice points  of the form $((-1)^{12} 1^9, 1,1,-3)$.
$$
\setlength{\arraycolsep}{8pt}
\begin{array}{c|cccc}
\delta_{\infty} \backslash\delta_0 & -1 & 0 & 1 & 2 \\
\hline
-1 & 280: (-1)^{12} 1^9 & 210: 0^{16} (-2)^3 2^2 & 21: 0^{20} (-4) & 1: 0^{21} \mystruth{11pt}\\
0 & 210: (-1)^{8} 1^{13} & 105:  0^{16} (-2) 2^4 & 21: 0^{20} 4& \\
1 & 21: (-1)^{16} 1^5 & 21: 0^{16} (-2)^5 &  & \\
2 & 1: 1^{21} &          & & \\
\hline
(x_{22}, x_{23}, x_{24}) & (1,1,-3) & (2,2,-2) & (0,0,-4)&  (4,4,0) \mystruth{11pt}
\end{array}
$$
\begin{remark}\label{rem:adhoc1}
The above construction of $\stdphi$ involves  seemingly \emph{ad hoc} choices of $\Pi_0, \Pi_{\infty}$,
$T_0, T_{\infty}$, 
and the code isomorphism $\varphi_0$.
In Remark~\ref{rem:adhoc2}, we show that these choices can be made arbitrarily
for the construction of an Edge correspondence.
\end{remark}
\section{The  representation of $\thePGamU$ in the Leech lattice}
\subsection{Proof of Lemma~\ref{lem:Lam22}}\label{subsec:lemma}
We recall the notion of \emph{discriminant forms} due to Nikulin~\cite{theNikulin}.
Let $L$ be an even lattice; that is,  $\intf{v, v}\in 2\Z$ holds for all $v\in L$.
Then $L$ is naturally embedded  into the dual lattice $L\dual:=\shortset{x\in L\tensor\Q}{\intf{x, v}\in \Z\;\textrm{for all}\;v\in L}$
as a submodule of finite index.
We call $\discg{L}:=L\dual/L$ the \emph{discriminant group} of $L$.
By $\discf{L}(x \bmod L):=\intf{x, x}\bmod 2\Z$, we obtain  a  finite quadratic form
$$
\discf{L}\dcolon  \discg{L}\to \Q/2\Z,
$$
which is called the \emph{discriminant form} of $L$.
In the following,
we denote by $\OG(\discf{L})$ the automorphism group
of the finite quadratic  form $\discf{L}\colon \discg{L}\to \Q/2\Z$,
which we let act on $\discf{L}$ from the right,
and by
 $$
 \eta\colon \OG(L)\to \OG(\discf{L})
 $$
the natural homomorphism.
\par
It is obvious that
$\diffTLatAB=\gen{TT\sprime \mid T, T\sprime \in \TTTAB}$ is contained in $\RABsperp$.
We can calculate that the orders of 
the discriminant groups of $\diffTLatAB$ and of $\RABsperp$ are both $12$.
Hence  $\diffTLatAB=\RABsperp$ holds.
By the choice of  $A$ and $B$   in~\eqref{eq:fixAB},
the sublattice  $\diffTLatAB=\RABsperp$ is the section of $\Leech$ by the linear subspace 
of $\Leech\tensor\R$ defined by
$x_{22}=x_{23}=x_{24}$.
By the definition of $\Lamtt$ in Figure 6.2 of~\cite{CSbook}, 
we obtain $\diffTLatAB=\Lamtt$.
(This fact has been already proved in~\cite{ShimadaPLMS}.)
\par
The orthogonal group $\OG(\RAB)$ is isomorphic to the dihedral group  of order $12$.
Indeed, there exist exactly $6$ vectors of square norm $4$ in $\RAB$,
and they form a regular hexagon.
By direct calculation, we see the following:
\begin{equation}\label{eq:ORGisom}
\textrm{The natural homomorphism $\eta\colon \OG(\RAB)\to \OG(\discf{\RAB})$
is an isomorphism.}
\end{equation}
By~Nikulin~\cite{theNikulin}, the even unimodular overlattice $\Leech$ of the orthogonal direct sum 
$\RABsperp\oplus \RAB$ defines an anti-isometry
$$
\zetaLam\dcolon  \discf{\RABsperp} \isom -\discf{\RAB},
$$
and that $\circtttAB$ is identified with
$$
\set{(g_1, g_2)\in \OG(\RABsperp)\times \OG(\RAB)}{\eta(g_1)\circ\zetaLam=\zetaLam\circ \eta(g_2)}.
$$
By~\eqref{eq:ORGisom}, 
we see that the first projection $\sigmaAB\colon \circtttAB\to \OG(\RABsperp)$ is an isomorphism.
Thus Lemma~\ref{lem:Lam22} is proved.
\begin{remark}\label{rem:invalgo}
The proof above indicates how to calculate $\sigmaAB\inv\colon \OG(\RABsperp) \to \circtttAB$.
Let $g_1\in \OG(\RABsperp)$ be given.
We calculate $\eta({g}_1)\in \OG(\discf{\RABsperp})$,
and transplant $\eta({g}_1)$ to $\eta({g}_2)\in \OG(\discf{\RAB})$  via $\zetaLam$.
Then there exists a unique isometry $g_2\in \OG(\RAB)$ that induces $\eta({g}_2)$.
The action of $(g_1, g_2)$  on $\RABsperp\oplus \RAB$
preserves the overlattice $\Leech$,
and hence induces $g\in \OG(\Leech)$,
which belongs to  $\circtttAB$ by definition.
\end{remark}
\subsection{Computation of  $\thePGamU\to \circtttAB$}
For an Edge correspondence $\phi$,
we denote by
$$
\Psi_{\phi}\dcolon  \thePGamU\to \circtttAB
$$
the composite homomorphism in Theorem~\ref{thm:main2}.
We present $\Psi_0:=\Psi_{\stdphi}$  in terms of matrices
for the Edge correspondence  $\stdphi$  constructed in Section~\ref{subsec:main1}.
\par
We let $\PGU(6, 4)$ act on $\P^5$ from the right.
Taylor~\cite{Tay87} gave a generating set $\{\alpha\sprime, \beta\sprime\}$ 
of the group
 $$
 \GU(6, 4)\sprime:=\set{g\in \GL(6, 4)}{g \cdot J_6\cdot \mattranspose{\bar{g}}=J_6},
 $$
where $J_6$ denotes the Hermitian form
$x_1 \bar{x}_6+x_2 \bar{x}_5+\cdots+x_6 \bar{x}_1$ on $\P^5$ defined over $\Ft$.
From this result, we see that 
 $\GU(6, 4)$
 is generated by the following two elements of order $3$ and $10$, respectively.
 $$
 \alpha:=
 \left[
 \begin{array}{cccccc}
\omega&  0&  0&  0&  0&  0 \\
 0&  1&  0&  0&  0&  0 \\ 
 0&  0&  1&  0&  0&  0 \\
 0&  0&  0&  1&  0&  0 \\ 
 0&  0&  0&  0&  1&  0 \\
 0&  0&  0&  0&  0&  \omega 
  \end{array}
  \right],
  \quad
  \beta:=
    \left[
  \begin{array}{cccccc}
\baromega& 0& 0& 1& 0& 1 \\
 1& 0& 1& \omega& 0& 0\\
 0& 1& 0& 0& 0& 0 \\
 0& 0& 0& 0& 1& 0 \\
 0& 0& \baromega& 1& 0& 1 \\
 1& 0& 1& 0& 0& \omega 
  \end{array}\right].
 $$
 Let $\gamma\in \thePGamU$
denote  the Frobenius action of $\Gal(\Ff/\Ft)$.
Then $\thePGamU$ is generated by $\alpha, \beta, \gamma$.
By direct calculation, we obtain the following:
\begin{theorem}\label{thm:main3}
Let $\tilde{\alpha}$, $\tilde{\beta}$ 
and $\tilde{\gamma}$ be the 
three matrices in Figure~\ref{figure:threemats} . 
Then the homomorphism $\Psi_0$ is given by 
$\alpha\mapsto \tilde{\alpha},
\beta\mapsto \tilde{\beta},
\gamma\mapsto \tilde{\gamma}$.
\end{theorem}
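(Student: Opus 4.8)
The plan is to make each of the four arrows composing $\Psi_0$ explicit on the three generators $\alpha,\beta,\gamma$ of $\thePGamU$ and then to compose; every ingredient needed has already been fixed in Section~\ref{subsec:main1}. First I would compute $\rhoPPP(\alpha),\rhoPPP(\beta)\in\Aut(\PPPX,\nuPPP)$ by letting the $6\times6$ matrices $\alpha,\beta$ act on $\P^5$ from the right and recording the induced permutations of the $891$ planes of $\PPPX$; the permutation $\rhoPPP(\gamma)$ is obtained by applying the Frobenius $a\mapsto a^2$ to the homogeneous data of each plane. Conjugating these three permutations by the explicit Edge correspondence $\stdphi$ (whose complete table is given in Section~\ref{subsec:main1}) yields the corresponding permutations of $\TTTAB$, that is, the images under the middle isomorphism $\Aut(\PPPX,\nuPPP)\isom\Aut(\TTTAB,\nuTTT)$.

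Next I would apply $\tauTTT$. A permutation $\pi$ of $\TTTAB$ preserving $\nuTTT$ also preserves the inner products of all difference vectors $TT\sprime=\midpt{T\sprime}-\midpt T$, because $\intf{TT\sprime, SS\spprime}$ is a fixed $\Z$-linear combination of the four values of $\nuTTT$ on $\{T,T\sprime\}\times\{S,S\spprime\}$. Since these differences generate $\diffTLatAB=\RABsperp$ and are permuted among themselves by $\pi$, they determine a well-defined isometry $g_1:=\tauTTT(\pi)\in\OG(\RABsperp)$; concretely $g_1$ is the unique orthogonal transformation of the $22$-dimensional space $\RABsperp\tensor\Q=\{x_{22}=x_{23}=x_{24}\}$ realizing $TT\sprime\mapsto (T^\pi)(T\sprime)^\pi$, found by linear algebra on a basis of $\RABsperp$ selected from these differences.

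Finally I would apply $\sigmaAB\inv$ to each $g_1$ exactly as in Remark~\ref{rem:invalgo}: compute $\eta(g_1)\in\OG(\discf{\RABsperp})$, transplant it through the anti-isometry $\zetaLam$ to $\OG(\discf{\RAB})$, lift it along the isomorphism~\eqref{eq:ORGisom} to the unique $g_2\in\OG(\RAB)$, and let $(g_1,g_2)$ act on the overlattice $\Leech\supset\RABsperp\oplus\RAB$. This produces the $24\times24$ matrix in $\circtttAB$ to be compared with $\tilde\alpha,\tilde\beta,\tilde\gamma$ of Figure~\ref{figure:threemats}. Because $\Psi_0$ is by construction a composite of homomorphisms, it is automatically a homomorphism, and so it suffices to match the images of the three generators; no relations among $\alpha,\beta,\gamma$ need be verified separately.

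The difficulty is not conceptual but one of scale and bookkeeping: one must carry the $891$-element permutations faithfully through $\stdphi$, realize the resulting isometries of the rank-$22$ lattice $\RABsperp$, and implement the discriminant-form gluing of Remark~\ref{rem:invalgo} without error. Each step is nevertheless completely explicit and finite, so the identities $\Psi_0(\alpha)=\tilde\alpha$, $\Psi_0(\beta)=\tilde\beta$, $\Psi_0(\gamma)=\tilde\gamma$ reduce to a direct computation. As independent checks one confirms that each output matrix lies in $\dotzero$ (right-orthogonal and preserving $\Leech$ in the sense $\BLam\,g\,\BLam\inv\in\GL_{24}(\Z)$), fixes $\RAB$ setwise, and has order $3$, $10$, $2$ respectively, matching the orders of $\alpha,\beta,\gamma$.
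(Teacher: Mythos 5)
Your proposal is correct and follows essentially the same route as the paper: the paper's proof of Theorem~\ref{thm:main3} is precisely the ``direct calculation'' you describe, namely computing the permutations $\rhoPPP(\alpha), \rhoPPP(\beta), \rhoPPP(\gamma)$ of the $891$ planes, transporting them through the explicit correspondence $\stdphi$ to permutations of $\TTTAB$, realizing these as isometries of $\diffTLatAB=\RABsperp$ via $\tauTTT$, and lifting to $\circtttAB$ by the gluing algorithm of Remark~\ref{rem:invalgo}. Your justification that a $\nuTTT$-preserving permutation induces a well-defined isometry of $\RABsperp$, and your observation that only the generators need to be matched, correctly fill in the details the paper leaves implicit.
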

{\renewcommand{\baselinestretch}{1}
\begin{figure}[H]
{\tiny
$$
\setlength{\arraycolsep}{2pt}
\hbox{\LARGE  $\frac{1}{4}$\;}
 \input figmatA
$$
\vfill
$$
\setlength{\arraycolsep}{2pt}
\hbox{\LARGE  $\frac{1}{8}$\;}
 \input figmatB
$$
\vfill
$$
\setlength{\arraycolsep}{2pt}
\hbox{\LARGE  $\frac{1}{8}$\;}
 \input figmatC
$$
}
\caption{$\tilde{\alpha}$, $\tilde{\beta}$, $\tilde{\gamma}$}\label{figure:threemats}
\end{figure}
}
\subsection{Proof of Corollary~\ref{cor:NNN}}
%
Let $\LLLX$ be the sublattice of $\NNNX$ generated by
the numerical equivalence classes  of the planes $\Pi\in \PPPX$.
Theorem~\ref{thm:main1}
implies  that $\LLLX$ is isomorphic to the lattice generated by the vectors $\midpt T\in \Leech\tensor\Q$,
where $\midpt$ is the mid-point of an edge of $OAB$, and $T$ runs through $\TTTAB$.
We show that $\LLLX=\NNNX$.
A direct calculation shows 
 that the discriminant group $\discg {\LLLX}$ of $\LLLX$  is isomorphic to $\F_2^2$, and that 
 the discriminant form $\discf {\LLLX}\colon \discg {\LLLX}\to \Q/\Z$ is given by
 $$
 \left[
 \begin{array}{cc}
 0 & 1/2 \\ 1/2 &0
 \end{array}\right].
 $$
Note that $\thePGU$ acts on $\LLLX\inj \NNNX$ equivariantly.
If $\LLLX\ne \NNNX$,
then $\NNNX/\LLLX$ would be  a non-zero $\thePGU$-invariant isotropic subspace
of $\discf {\LLLX}$.
However,
by looking at the action of $\alpha\in \thePGU$
on $\discg{\LLLX}$,
we conclude that there exists no such isotropic subspace.
\subsection{Proof of Theorem~\ref{thm:main2} and Corollaries~\ref{cor:rhoPPP} and~\ref{cor:Lam22}}\label{subsec:main2}
By Lemma~\ref{lem:Lam22}, the homomorphism $\tauTTT$ is regarded as a homomorphism to $ \OG(\RABsperp)$. 
\begin{lemma}\label{lem:doesnotcontain}
The natural homomorphism $\tauTTT\colon \Aut(\TTTAB, \nuTTT)\to \OG(\RABsperp)$
is injective, and the image does not contain $-I_{22}$.
\end{lemma}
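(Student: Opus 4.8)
The plan is to derive both assertions from a single observation about $\tauTTT$. By construction $\tauTTT(\psi)$ is the isometry of $\diffTLatAB=\RABsperp$ sending each generator $TT\sprime=T\sprime-T$ to $\psi(T\sprime)-\psi(T)$; since these differences generate $\RABsperp$, the isometry $\tauTTT(\psi)$ is both determined by and determines the differences $\psi(T\sprime)-\psi(T)$. Injectivity and the exclusion of $-I_{22}$ then become statements about which permutations $\psi$ of $\TTTAB$ act on differences as $I_{22}$, respectively $-I_{22}$.

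For injectivity, suppose $\psi\in\ker\tauTTT$. Then $\tauTTT(\psi)=I_{22}$ fixes every generator, so $\psi(T\sprime)-\psi(T)=T\sprime-T$ for all $T,T\sprime\in\TTTAB$; equivalently $\psi(T)-T$ equals a vector $c$ independent of $T$. As $\psi$ merely permutes the finite set $\TTTAB$, summing the identities $\psi(T)=T+c$ over $T\in\TTTAB$ gives $|\TTTAB|\,c=0$, hence $c=0$ and $\psi=\mathrm{id}$. This step is essentially formal.

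The substance is in showing that $-I_{22}$ is not attained. Suppose $-I_{22}=\tauTTT(\psi)$. Then $\psi(T\sprime)-\psi(T)=-(T\sprime-T)$ for all $T,T\sprime$, so $s:=\psi(T)+T$ is a fixed vector and $\psi$ is the point reflection $T\mapsto s-T$ of $\TTTAB$. Because $\psi$ preserves $\nuTTT$ it preserves the Leech inner product $\intf{T,T\sprime}=\nuTTT(T,T\sprime)+1$; expanding $\intf{s-T, s-T\sprime}=\intf{T,T\sprime}$ and taking $T=T\sprime$ yields $\intf{s,T}=\intf{s,s}/2$, a constant over all $T\in\TTTAB$. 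Hence $\intf{s, T\sprime-T}=0$ for every difference, so $s$ is orthogonal to $\diffTLatAB=\RABsperp$. By Lemma~\ref{lem:Lam22}, $\RABsperp$ is the section of $\Leech$ by $x_{22}=x_{23}=x_{24}$, and a short computation with the inner product $(\sum x_i y_i)/8$ shows that its orthogonal complement in $\Leech\tensor\R$ consists of the vectors whose first $21$ coordinates vanish and whose last three coordinates sum to $0$; thus $s$ has these two properties.

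It remains to reach a numerical contradiction, and this is the only delicate point: one must pin down $s$ using the explicit coordinates fixed in Section~\ref{subsec:main1}. Apply the above to the type-$0$ vector $T_0=(0^{21},4,4,0)$, which is the unique element of $\TTTAB$ all of whose first $21$ coordinates vanish. Since $s=\psi(T_0)+T_0$ and both $s$ and $T_0$ have zero first $21$ coordinates, so does $\psi(T_0)$; by uniqueness $\psi(T_0)=T_0$, whence $s=2T_0=(0^{21},8,8,0)$. But then $s_{22}+s_{23}+s_{24}=8+8+0=16\neq0$, contradicting the conclusion that $s$ lies in the orthogonal complement of $\RABsperp$. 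Therefore no such $\psi$ exists, and $-I_{22}$ lies outside the image. The argument rests on the explicit description of $\TTTAB$ rather than on any abstract feature of $(\TTTAB,\nuTTT)$.
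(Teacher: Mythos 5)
Your proof is correct, but it takes a genuinely different route from the paper's. The paper derives both assertions from a single computational fact: letting $\SSS_0$ be the set of pairs $[T, T\sprime]$ with $T, T\sprime\in \TTTAB$ and $\intf{T, T\sprime}=0$ (there are $891\cdot 42$ of them), the map $s_0\colon [T, T\sprime]\mapsto \pr_{R\sperp}(TT\sprime)$ is checked by direct calculation to be injective, with image spanning $\RABsperp\tensor\Q$. Since $\tauTTT(\psi)$ carries $TT\sprime$ to $\psi(T)\psi(T\sprime)$, injectivity of $s_0$ forces any $\psi\in\Ker\tauTTT$ to fix every pair in $\SSS_0$, hence every point of $\TTTAB$; and a $\psi$ with $\tauTTT(\psi)=-I_{22}$ would have to act on $\SSS_0$ by $[T, T\sprime]\mapsto [T\sprime, T]$ simultaneously for all pairs, which is absurd. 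You replace both uses of that computed injectivity: for the kernel, the purely formal observation that an automorphism acting as the identity on differences is a translation $T\mapsto T+c$, and summing over the finite set $\TTTAB$ kills $c$; for $-I_{22}$, the observation that such an automorphism is a point reflection $T\mapsto s-T$ whose center must satisfy $\intf{s, T}=\intf{s,s}/2$ for all $T$, hence is orthogonal to $\diffTLatAB=\RABsperp$, which the explicit type classification of $\TTTAB$ then contradicts (the unique type-$0$ vector forces $s=2T_0=(0^{21},8,8,0)$, whose last three coordinates sum to $16\neq 0$). What your route buys: the injectivity half is computation-free and would work for any configuration of this kind, and the $-I_{22}$ half uses only data the paper has already tabulated (the list of types of vectors in $\TTTAB$ for the fixed $A$, $B$ of~\eqref{eq:fixAB}), rather than a separate injectivity check over $891\cdot 42$ pairs. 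What the paper's route buys: one uniform lemma about $s_0$ dispatches both claims at once, in the same style as the other computer-verified statements in the paper. Both proofs, like the paper's, may assume the normalization~\eqref{eq:fixAB} of $A$ and $B$, since $\dotzero$ acts transitively on $\AAA$.
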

\begin{proof}
Let $\SSS_0$ be the set of pairs $[T, T\sprime]$
with $T, T\sprime\in \TTTAB$ and $\intf{T, T\sprime}=0$.
We have $|\SSS_0|=891\cdot 42$.
Let $\pr_{R\sperp}\colon \Leech\tensor\Q\to \RABsperp\tensor\Q$
be the orthogonal projection.
We consider  the map 
$$
s_0\dcolon   [T, T\sprime]\mapsto \pr_{R\sperp}(T T\sprime)
$$
from $\SSS_0$ to $\RABsperp$.
By direct calculation, 
we see that  $s_0$ is injective, and its image spans $\RABsperp\tensor\Q$.
Hence an automorphism of $(\TTTAB, \nuTTT)$ belonging to  the kernel of $\tauTTT$
acts on $\SSS_0$ trivially.
Therefore $\tauTTT$ is injective.
If $-I_{22}$ were in the image of $\tauTTT$,
then there would exist an automorphism of $(\TTTAB, \nuTTT)$
that acts on $\SSS_0$ as $[T, T\sprime]\mapsto [T\sprime, T]$,
which is absurd.
\end{proof}
\begin{proof}[Proof of Theorem~\ref{thm:main2} and Corollaries~\ref{cor:rhoPPP} and~\ref{cor:Lam22}]
First we prove the assertion of Theorem~\ref{thm:main2} for $\Psi_0$;
that is, we are in the case where $\phi=\stdphi$.
Since $\lambda^6=1$ for all $\lambda\in \Ff\sptimes$,
it follows that  $\det\colon \GL(6,4)\to \Ff\sptimes$
factors as $\GL(6,4)\to \PGL(6, 4)\to \Ff\sptimes$.
Hence we have a homomorphism
$$
\detsprime \dcolon  \thePGamU=\thePGU\semidirectproduct \Gal(\Ff/\Ft)   \to \Ff\sptimes \semidirectproduct \Gal(\Ff/\Ft),
$$
whose kernel is  $\thePSU$.
We consider the composite homomorphism
$$
\psi_0\dcolon  \thePGamU \maprightsb{\Psi_{0}}  \circtttAB  \maprightsp{}  \OG(\RAB),
$$
where $\circtttAB\to \OG(\RAB)$ is the natural homomorphism.
Note that the kernel of $\circtttAB\to \OG(\RAB)$  is $\dottttAB$.
We denote elements of  $\OG(\RAB)$, which acts on $\RAB$ from the right, by the matrices
with respect to the basis $OA$ and $OB$.
Using the three matrices $\Psi_0(\alpha)=\tilde{\alpha}, \Psi_0(\beta)=\tilde{\beta}, \Psi_0(\gamma)= \tilde{\gamma}$
and the algorithm in Remark~\ref{rem:invalgo},
we obtain the following:
$$
\begin{array}{c|ccc}
&\alpha & \beta & \gamma \\
\hline 
\detsprime &\baromega & 1 & \textrm{Frobenius} \mystruthd{10pt}{8pt}\\
\psi_0 & \left[\begin{array}{cc} 0&-1\\1&-1\end{array}\right] & \left[\begin{array}{cc} 1&0\\0&1\end{array}\right]  & \left[\begin{array}{cc} 1&-1\\0&-1\end{array}\right] 
\end{array}
$$
Since $\detsprime(\beta)=1$ and $\Psi_{0}(\beta)=\tilde\beta\ne 1$,
we see that $\thePSU\not\subset \Ker \Psi_{0}$.
Since $\thePSU$ is simple, we have $\thePSU\cap \Ker \Psi_{0}=1$.
Hence $\detsprime$ embeds $\Ker \Psi_{0}$ into $\Ff\sptimes \semidirectproduct \Gal(\Ff/\Ft)$.
The table above also shows that the mapping 
$$
\baromega\mapsto \psi_0(\alpha),\;\;\;
1\mapsto \psi_0(\beta),\;\;\;
\textrm{Frobenius}\mapsto \psi_0(\gamma)
$$
induces
an injective homomorphism
$$
i\dcolon \Ff\sptimes \semidirectproduct \Gal(\Ff/\Ft) \inj  \OG(\RAB)
$$
 such that 
$\psi_0$ factors as 
$$
\thePGamU \maprightsb{\detsprime } \Ff\sptimes \semidirectproduct \Gal(\Ff/\Ft) \maprightinjsb{i} \OG(\RAB).
$$
Therefore  $\Ker \Psi_0$ is trivial.
Plesken and Pohst~\cite{PleskenPohst93} showed that $| \OG(\Lamtt)|=12\theD$,
where $\theD$ is given in~\eqref{eq:theD}.
By Lemma~\ref{lem:Lam22}, we have
$|\circtttAB|=12\theD$.
By Lemma~\ref{lem:doesnotcontain},
we see that $\Image \Psi_0$ does not contain $-1$ and hence $|\Image \Psi_0|\le 6\theD=|\thePGamU|$.
Therefore  $\rhoPPP$ is an isomorphism, $|\Image \Psi_0|= 6\theD$, and $\circtttAB$ is generated by $\Image \Psi_0$ and $-1$.
Moreover
 $\Psi_0$ induces an isomorphism from  $\thePSU$ to $\dottttAB$.
Thus  the assertion of Theorem~\ref{thm:main2} for $\Psi_0$
and Corollaries~\ref{cor:rhoPPP} and~\ref{cor:Lam22} are proved.
\par
Let $\phi$ be an arbitrary Edge correspondence.
Since $\rhoPPP$ is an isomorphism,
the homomorphism $\Psi_{\phi}$ differs from $\Psi_0$ only by an inner automorphism of $\thePGamU$.
Hence Theorem~\ref{thm:main2} holds  for $\phi$.
\end{proof}	
\begin{remark}\label{rem:adhoc2}
We prove the assertion made in Remark~\ref{rem:adhoc1}.
We put 
$$
\Xi:=\set{[\Pi, \Pi\sprime]}{\Pi, \Pi\sprime\in \PPPX, \;\dim (\Pi\cap \Pi\sprime)=-1}.
$$
We have $|\Xi|=891\cdot 512$.
For each $[\Pi, \Pi\sprime]\in \Xi$, we put
$$
S_{[\Pi, \Pi\sprime]}:=\set{\Pi\spprime\in \PPPX}{\dim (\Pi\cap \Pi\spprime)=1, \;\dim (\Pi\sprime\cap \Pi\spprime)=-1}
$$
Let $\HHH_{[\Pi, \Pi\sprime]}\subset 2^{S_{[\Pi, \Pi\sprime]}}$ be the linear code
generated by the codewords
$$
\ell_P:=\set{\Pi\spprime\in S_{[\Pi, \Pi\sprime]}}{P\in \Pi\spprime}, 
$$
where $P$ runs through $\Pi(\Ff)$.
We can easily prove that $\thePGamU$ acts on $\Xi$ transitively.
Let $G_{[\Pi, \Pi\sprime]}$ be  the stabilizer subgroup  of $[\Pi, \Pi\sprime]\in \Xi$ in $\thePGamU$,
whose order is $6\theD/891/512=120960=|\PGamL(3, 4)|$.
Then $G_{[\Pi, \Pi\sprime]}$ acts on $S_{[\Pi, \Pi\sprime]}$ and on  $\HHH_{[\Pi, \Pi\sprime]}$.
We can also prove by direct calculation that the action of $G_{[\Pi, \Pi\sprime]}$ on $S_{[\Pi, \Pi\sprime]}$ is faithful
(see also~\cite{ShimadaPLMS}).
Hence $G_{[\Pi, \Pi\sprime]}\to \Aut(\HHH_{[\Pi, \Pi\sprime]})\cong \PGamL(3, 4)$ is an isomorphism.
Since $\thePGamU\cong \Aut(\PPPX, \nuPPP)\cong \Aut(\TTTAB, \nuTTT)$,
it follows  that $\Aut(\TTTAB, \nuTTT)$ acts on the set 
$$
\phi_0(\Xi)=\set{[T, T\sprime]}{T, T\sprime\in \TTTAB, \;\intf{T, T\sprime}=1}
$$
 transitively.
Therefore,
in the construction of the Edge correspondence $\phi$,
the choices of $\Pi_0, \Pi_{\infty}$,
$T_0, T_{\infty}$, 
and the code isomorphism $\varphi_0$ can be made arbitrarily.
\end{remark}
%

%
%
%
\end{document}